\documentclass{elsarticle}

\usepackage{amsmath, amsfonts, amssymb, amsthm, xparse}
\numberwithin{equation}{section}

\newtheorem{lemma}{Lemma}

\newtheorem{theorem}{Theorem}
\newtheorem{corollary}{Corollary}[section]
\newtheorem{referencetheorem}{Theorem}

\newtheorem{referencelemma}[referencetheorem]{Lemma}

\theoremstyle{definition}\newtheorem*{definition}{Definition}

\newcommand{\D}{\mathcal{D}}
\newcommand{\n}[1]{\left\| #1 \right\|}
\newcommand{\br}[1]{\left( #1 \right)}

\begin{document}

\begin{frontmatter}
\title{On the generalized Approximate Weak Chebyshev Greedy Algorithm}
\author{A.\,V.~Dereventsov}
\ead{dereventsov@gmail.com}
\address{1523 Greene street, 
		University of South Carolina, 
		Columbia, SC 29208, USA}

\begin{abstract}
The Weak Chebyshev Greedy Algorithm (WCGA) is defined for any Banach space $X$ and 
a dictionary $\D$, and provides nonlinear $n$-term approximation 
for a given element $f \in X$ with respect to $\D$.
In this paper we study the generalized  Approximate Weak Chebyshev Greedy 
Algorithm (gAWCGA) --- a modification of the WCGA in which we are 
allowed to calculate $n$-term approximation with relative and 
absolute errors in computing a norming functional, 
an element of best approximation, and an approximant. 
Such permission is natural for the numerical 
applications and simplifies realization of the algorithm.
We obtain conditions that are sufficient for the convergence of the 
gAWCGA for any element of a uniformly smooth Banach space, 
and show that they are necessary in the class of uniformly smooth 
Banach spaces with the modulus of smoothness of non-trivial power 
type (e.g. $L_p$ spaces for $1 < p < \infty$).
In particular, we show that if all the errors are from $\ell_1$ 
space then the conditions for the convergence of the gAWCGA 
are the same as for the WCGA.
We also construct an example of a smooth Banach space
in which the algorithm diverges for a dictionary and an element,
thus showing that the smoothness of a space is not
sufficient for the convergence of the WCGA.
\end{abstract}

\begin{keyword}
Banach space \sep nonlinear approximation \sep 
greedy algorithm \sep Weak Chebyshev Greedy Algorithm \sep 
generalized Approximate Weak Chebyshev Greedy Algorithm.
\end{keyword}

\end{frontmatter}

\section{Introduction}\label{introduction}

This paper is devoted to the problem of greedy approximation in 
Banach spaces. 
We consider here the Weak Chebyshev Greedy Algorithm (WCGA), which was 
studied by V.\,N.~Temlyakov (see, for instance,
~\cite{temlyakov_gabs},~\cite{temlyakov_ga}).
The WCGA is defined for any Banach space, and provides nonlinear 
$n$-term approximations of a given element of the space with respect 
to a fixed set of elements.
For the purpose of numerical applications it seems logical to 
allow the steps of the WCGA to be calculated not exactly, 
but with some inaccuracies. 
We note that such approach was used for other types of greedy algorithms
(e.g. see~\cite{gribonval_nielsen_awga} and~\cite{galatenko_livshitz_gawga}).
If the reader would like to know more about other types
of greedy approximations, he may review survey 
papers~\cite{temlyakov_konyagin_garbgms},
\cite{temlyakov_garb}, and~\cite{temlyakov_gabs2}.

The modification of the WCGA with relative errors in computing 
the steps of the algorithm, the Approximate Weak Chebyshev Algorithm
(AWCGA), was studied in~\cite{temlyakov_gtabsa} and~\cite{dereventsov_awcgausbs}.
In this paper we study another modification of the WCGA, in which we
are allowed to make both absolute and relative errors on 
every step of the algorithm.
Similar to the terminology proposed in~\cite{galatenko_livshitz_gawga}
we call this modification the generalized Approximate Weak Chebyshev 
Algorithm (gAWCGA).

Recall that a dictionary is such a set $\D$ of elements of a real Banach space $X$ 
that $\overline{\text{span}}\, \D = X$ and elements of $\D$ are normalized, i.e.
$\n{g} = 1$ for any $g \in \D$. For convenience we assume that all
dictionaries are symmetric, i.e. if $g  \in \D$ then $-g \in \D$.
By $A_1(\D)$ we denote the closure of the convex hull of a dictionary $\D$,
and by $A_0(\D)$ we denote all the linear combinations of the elements
of a dictionary $\D$.

We define the following sequences, which represent 
the inaccuracies in calculating the steps of the algorithm.
A weakness sequence $\{(t_n, t'_n)\}_{n=1}^{\infty}$
is such that $0 \leq t_n \leq 1$ and $t'_n \geq 0$ for all $n \geq 1$. 
A perturbation sequence $\{(\delta_n, \delta'_n)\}_{n=0}^{\infty}$
is such that $\delta_n \geq 0$ and $\delta'_n \geq 0$ for all $n \geq 0$.
An error sequence $\{(\eta_n, \eta'_n)\}_{n=1}^{\infty}$
is such that $\eta_n \geq 0$ and $\eta'_n \geq 0$ for all $n \geq 1$.
By $\eta_0$ and $\eta'_0$ we denote the least upper bounds of the
sequences $\{\eta_n\}_{n=1}^\infty$ and $\{\eta'_n\}_{n=1}^\infty$ respectively.

For a Banach space $X$, a dictionary $\D$, and an element $f \in X$,
the generalized Approximate Weak Chebyshev Greedy Algorithm with 
a weakness sequence $\{(t_n, t'_n)\}_{n=1}^{\infty}$, 
a perturbation sequence $\{(\delta_n, \delta'_n)\}_{n=0}^{\infty}$,
and an error sequence $\{(\eta_n, \eta'_n)\}_{n=1}^{\infty}$ 
is defined as follows.

\begin{definition}{{\bf(gAWCGA)}}
Set $f_0 = f$ and for each $n \geq 1$
\begin{enumerate}
\item take any functional $F_{n-1}$ satisfying
\begin{equation}\label{gawcga_functional}
\n{F_{n-1}} \leq 1
\quad\text{and}\quad
F_{n-1}(f_{n-1}) \geq (1 - \delta_{n-1}) \n{f_{n-1}} - \delta'_{n-1},
\end{equation}
\item choose any element $\phi_n \in \D$ such that
\begin{equation}\label{gawcga_approximant}
F_{n-1}(\phi_n) \geq t_n \sup\limits_{g \in \D} F_{n-1}(g) - t'_n,
\end{equation}
\item for $\Phi_n = \text{span}\{\phi_j\}_{j=1}^{n}$ 
denote $E_n = \inf\limits_{G\in\Phi_n}\n{f - G}$
and find any $G_n \in \Phi_n$ satisfying
\begin{equation}\label{gawcga_approximation}
\n{f - G_n} \leq (1 + \eta_n) E_n + \eta'_n,
\end{equation}
\item define $G_n$ to be the $n$-th approximation 
and set $f_n = f - G_n$ to be the $n$-th remainder.
\end{enumerate}
\end{definition}

Note that if for every $n \geq 1$ either $t_n < 1$ or $t'_n > 0$ 
then for any Banach space $X$, any dictionary $\D$, and any element $f \in X$
there exists a possible realization of the algorithm.
We say that the gAWCGA of $f$ converges if every realization provides a 
sequence of approximations $\{G_n\}_{n=1}^\infty$ that converges to $f$.
Conversely, we say that the gAWCGA diverges if there exists such 
a realization that $G_n \not\to f$ as $n \to \infty$.

Note also that if $t'_n = \delta_{n-1} = \delta'_{n-1} = \eta_n = \eta'_n = 0$ 
for all $n \geq 1$ then the gAWCGA coincides with the WCGA 
which was studied in~\cite{temlyakov_gabs}
and~\cite{dilworth_kutzarova_temlyakov_csgabs}.
In the case $t'_n = \delta'_{n-1} = \eta'_n = 0$ the gAWCGA coincides
with the AWCGA which was studied 
in~\cite{temlyakov_gtabsa} and~\cite{dereventsov_awcgausbs}.

One of the goals of this paper is to investigate the behavior of the gAWCGA
in a uniformly smooth Banach space $X$ and to 
obtain conditions on the weakness, perturbation, 
and error sequences that guarantee the convergence
of the gAWCGA for all dictionaries $\D \subset X$ and all elements $f \in X$.
In section~\ref{convergence_gawcga_uniformly_smooth_spaces}
we state the sufficient conditions for the convergence of the gAWCGA
with arbitrary sequences 
$\{t'_n\}_{n=1}^\infty$, $\{\delta'_n\}_{n=0}^\infty$, and 
$\{\eta'_n\}_{n=1}^\infty$
in a uniformly smooth Banach space $X$, and show that they are
the necessary ones if $X$ has a modulus of smoothness of 
non-trivial power type.

We understand the necessity of conditions in the following way:
if at least one of the stated conditions does not hold,
one can find a uniformly smooth Banach space $X$, 
a dictionary $\D$, and an element $f \in X$ such 
that the gAWCGA of $f$ with the given weakness, perturbation, and 
error sequences, diverges.
We note that in our case such a Banach space and dictionary need not
be complicated. In fact, we demonstrate that an 
example of the divergent gAWCGA can be found even in 
$\ell_p$ space with the canonical basis as a dictionary.

In section~\ref{proofs} we prove theorems stated in 
section~\ref{convergence_gawcga_uniformly_smooth_spaces}.
We note that while we are interested in the question of strong convergence 
of the WCGA and its modifications, the more general setting was 
considered in~\cite{dilworth_kutzarova_temlyakov_csgabs}.

Another goal of this paper is to discuss the restrictions imposed
on a Banach space $X$ that are required for the convergence of the WCGA.
It is known (see~\cite{temlyakov_gabs}) that the WCGA with
a constant weakness sequence $0 < t \leq 1$ 
(denoted further as WCGA($t$)) converges in
all uniformly smooth Banach spaces for all dictionaries and 
elements of the space. However the uniform smoothness is not necessary:
it is shown in~\cite{dilworth_kutzarova_temlyakov_csgabs} 
that every separable reflexive Banach space $X$ admits an
equivalent norm for which the WCGA($t$) converges for any dictionary $\D$ 
and any element $f \in X$. Furthermore, one can find a separable reflexive 
Banach space that does not admit an equivalent uniformly smooth norm 
(e.g. see~\cite{beauzamy_ibstg}). 
Thus, the condition of uniform smoothness of a space can be
weakened. In particular, it is shown in~\cite{dilworth_kutzarova_temlyakov_csgabs} 
that if a reflexive Banach space $X$ has the Kadec-Klee property
and a Fr\'echet differentiable norm, then the WCGA($t$) converges 
for any dictionary $\D$ and any element $f \in X$.

On the other hand, it is shown in~\cite{dubinin_gaa} that the smoothness
of the space is equivalent to a decrease of norms of the 
remainders of the WCGA for any dictionary $\D$ and any element $f \in X$. 
Thus, smoothness of a space is necessary for the
convergence of the algorithm and it would be natural to assume that 
smoothness might also be a sufficient condition.
In section~\ref{smoothness_non-sufficiency_wcga} we refute
this hypothesis by demonstrating an example of a smooth Banach space,
a dictionary, and an element, for which the WCGA diverges.
To construct the desired Banach space, we adopt the technique 
that was used in~\cite{donahue_darken_gurvits_sontag_rcanhs} 
to prove the necessity of smoothness of a space for the 
convergence of the incremental approximation.
Namely, we re-norm $\ell_1$ space by introducing the sequence of 
recursively defined semi-norms $\{\vartheta_n\}_{n=1}^\infty$, 
each of which is the $\ell_{p_n}$-norm
of the previously calculated semi-norm $\vartheta_{n-1}$ 
and the $n$-th coordinate of the element, where the sequence
$\{p_n\}_{n=1}^\infty$ decreases to $1$ sufficiently fast.
The reason for such a complicated approach is that the constructed space 
must be smooth but not uniformly smooth, which is already
a non-trivial task.
We note that an analogous space was used in~\cite{livshitz_cgabs}
to prove the insufficiency of smoothness of a space for the 
convergence of the X-Greedy Algorithm.

\section{Convergence of the gAWCGA in uniformly smooth Banach spaces}
\label{convergence_gawcga_uniformly_smooth_spaces}

We begin this section by recalling a few definitions.
A functional $F_x$ is a norming functional of a non-zero element $x$ 
of a Banach space $X$ if $\n{F_x} = 1$ and $F_x(x) = \n{x}$.
A Banach space $X$ is smooth if for any non-zero element $x \in X$
there exists a unique norming functional $F_x$.

For a Banach space $X$ the modulus of smoothness $\rho(u)$ is defined by
\begin{equation}\label{modulusofsmoothness}
\rho(u) = \sup_{\n{x}=\n{y}=1} \frac{\n{x+uy} + \n{x-uy}}{2} - 1.
\end{equation}
Note that the modulus of smoothness is an even and convex function and, 
therefore, $\rho(u)$ is non-decreasing on $(0, \infty)$.
A Banach space is uniformly smooth if $\rho(u) = o(u)$ as $u \to 0$.
We say that the modulus of smoothness $\rho(u)$ is of power type 
$1 \leq q \leq 2$ if $\rho(u) \leq \gamma u^q$ for some $\gamma > 0$.
It is easy to see that any Banach space has a modulus of smoothness 
of power type $1$ and that any Hilbert space has a modulus of smoothness 
of power type $2$.
Denote by $\mathcal{P}_q$ the class of all uniformly smooth Banach spaces 
with the modulus of smoothness of nontrivial power type $1 < q \leq 2$.
In particular, it is known 
(see Lemma~B.1 from~\cite{donahue_darken_gurvits_sontag_rcanhs})
that the modulus of smoothness $\rho_p(u)$ of $L_p$ space satisfies
\[
\rho_p(u) \leq \left\{
\begin{array}{ll}
\frac{1}{p} \, u^p & 1 < p \leq 2 \\
\frac{p-1}{2} \, u^2 & 2 \leq p < \infty
\end{array}
\right.,
\]
hence $L_p \in \mathcal{P}_q$, where $q = \min\{p; 2\}$
for any $1 < p < \infty$.

For a weakness sequence $\{t_n\}_{n=1}^{\infty}$ 
and a number $0 < \theta \leq 1/2$ we define a sequence of positive numbers 
$\{\xi_n\}_{n=1}^\infty$ which satisfy the equality 
$\rho(\xi_n) = \theta t_n \xi_n$ for each $n \geq 1$.
It is shown in~\cite{temlyakov_gabs} that if a Banach space is uniformly smooth 
then for any $0 < \theta \leq 1/2$ the sequence $\{\xi_n\}_{n=1}^\infty$ 
exists and is uniquely determined by the sequence $\{t_n\}_{n=1}^\infty$.

We now state some known results concerning the convergence of the WCGA 
and its modifications in arbitrary uniformly smooth Banach spaces.
The first result states the sufficient conditions for the convergence of the 
WCGA (see Theorem~2.1 from~\cite{temlyakov_gabs}).
\begin{referencetheorem}
The WCGA with a weakness sequence $\{t_n\}_{n=1}^\infty$ converges 
for any uniformly smooth Banach space $X$, any dictionary $\D$,
any element $f \in X$ if for any $0 < \theta \leq 1/2$
\[
\sum_{n=1}^\infty t_n \xi_n = \infty.
\]
\end{referencetheorem}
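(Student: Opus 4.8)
The plan is to argue by contradiction: if $\n{f-G_n}\not\to 0$, I will show that $\n{f_n}$ decays at a rate incompatible with $\sum t_n\xi_n=\infty$. First I would observe that $\{\n{f_n}\}$ is non-increasing, since $\Phi_{n-1}\subset\Phi_n$ forces $E_n\le E_{n-1}$ and in the WCGA $\n{f_n}=E_n$. Hence $\n{f_n}\to a$ for some $a\ge 0$, and it suffices to contradict $a>0$; so assume $a>0$, whence $a\le\n{f_{n-1}}\le\n{f}$ for all $n$.

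The first ingredient is a one-step decay estimate. For any $\lambda\ge 0$ the element $G_{n-1}+\lambda\phi_n$ lies in $\Phi_n$, so $\n{f_n}\le\n{f_{n-1}-\lambda\phi_n}$. Applying~\eqref{modulusofsmoothness} to the unit vectors $f_{n-1}/\n{f_{n-1}}$ and $\phi_n$ gives $\n{f_{n-1}-\lambda\phi_n}+\n{f_{n-1}+\lambda\phi_n}\le 2\n{f_{n-1}}\br{1+\rho(\lambda/\n{f_{n-1}})}$, and since $X$ is smooth the functional $F_{n-1}$ produced in step~1 is the unique norming functional of $f_{n-1}$, so $\n{f_{n-1}+\lambda\phi_n}\ge F_{n-1}(f_{n-1}+\lambda\phi_n)=\n{f_{n-1}}+\lambda F_{n-1}(\phi_n)$. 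Combining these,
\[
\n{f_n}\le\n{f_{n-1}}-\lambda F_{n-1}(\phi_n)+2\n{f_{n-1}}\,\rho\!\br{\frac{\lambda}{\n{f_{n-1}}}},\qquad\lambda\ge 0.
\]

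The second ingredient is a lower bound for $F_{n-1}(\phi_n)$. Because $G_{n-1}$ minimizes $\n{f-G}$ over $G\in\Phi_{n-1}$ and norming functionals are unique in the smooth space $X$, the functional $F_{n-1}$ annihilates $\Phi_{n-1}$; in particular $F_{n-1}(f)=F_{n-1}(f_{n-1})=\n{f_{n-1}}$. Given $\epsilon>0$, choose $h=\sum_i c_i g_i\in A_0(\D)$ with $\n{f-h}<\epsilon$ and put $A=\sum_i|c_i|$. By the symmetry of $\D$,
\[
\n{f_{n-1}}=F_{n-1}(f)\le\n{f-h}+\sum_i|c_i|\,F_{n-1}\br{\operatorname{sgn}(c_i)\,g_i}\le\epsilon+A\sup_{g\in\D}F_{n-1}(g),
\]
and therefore, by~\eqref{gawcga_approximant} with $t'_n=0$, $F_{n-1}(\phi_n)\ge t_n\br{\n{f_{n-1}}-\epsilon}/A$.

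Finally I would combine the two estimates and optimize over $\lambda$. Taking $\epsilon\le a/2$ gives $\n{f_{n-1}}-\epsilon\ge\tfrac12\n{f_{n-1}}$, and the choice $\lambda=\xi_n\n{f_{n-1}}$, which makes $\rho(\lambda/\n{f_{n-1}})=\rho(\xi_n)=\theta t_n\xi_n$, together with $\n{f_{n-1}}\ge a$ yields
\[
\n{f_n}\le\n{f_{n-1}}\br{1-t_n\xi_n\br{\frac{a}{2A}-2\theta}}.
\]
Fixing $\theta=\min\{a/(8A),\,1/2\}$ makes the parenthesis at least $1-c\,t_n\xi_n$ for some $c=c(a,A)>0$; since $\n{f_n}\ge 0$ forces $c\,t_n\xi_n\le 1$, iterating with the bound $1-x\le e^{-x}$ gives $\n{f_n}\le\n{f}\exp\!\br{-c\sum_{k=1}^n t_k\xi_k}$, which tends to $0$ because $\sum t_n\xi_n=\infty$ is assumed for this very $\theta$. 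This contradicts $\n{f_n}\to a>0$, so $a=0$ and $G_n\to f$. The step I expect to require the most care is precisely this last one: the relevant $\theta$ depends on the a priori unknown limit $a$ (and on $A$, hence on $\epsilon$), which is exactly why divergence of $\sum t_n\xi_n$ must be hypothesized for every $0<\theta\le 1/2$ rather than for a single one; a lesser subtlety is the verification that $F_{n-1}$ vanishes on $\Phi_{n-1}$, which rests on uniqueness of norming functionals in smooth spaces.
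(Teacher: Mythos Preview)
Your proof is correct and follows essentially the same route as the paper: the paper does not prove this reference theorem directly, but its proof of Theorem~\ref{gawcga_convergence} specializes to exactly your argument when all error parameters vanish (contradiction on $\n{f_n}\ge\alpha>0$, the modulus-of-smoothness decay inequality, the lower bound on $F_{n-1}(\phi_n)$ via an $A_1(\D)$/$A_0(\D)$ approximant of $f$, and iteration after choosing $\lambda=\xi_n\n{f_{n-1}}$ and $\theta$ small in terms of $\alpha$ and $A$). The only noteworthy difference is that you invoke smoothness to get $F_{n-1}|_{\Phi_{n-1}}=0$ directly, whereas the paper's Lemma~\ref{F_n(phi_n)} proves a quantitative bound $|F_n(\phi)|\le\beta_n(\phi)$ that survives perturbations; in the exact WCGA case $\beta_n\equiv 0$ and the two coincide.
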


The next theorem gives the sufficient conditions for the convergence of the 
AWCGA (see Theorem~2.2 from~\cite{temlyakov_gtabsa}).
\begin{referencetheorem}
The AWCGA with a weakness sequence $\{t_n\}_{n=1}^\infty$, 
a perturbation sequence $\{\delta_n\}_{n=0}^{\infty}$,
and an error sequence $\{\eta_n\}_{n=1}^{\infty}$
converges for any uniformly smooth Banach space $X$, any dictionary $\D$, 
and any element $f \in X$ if 
\[
\eta_0 = \sup_{n \geq 1} \eta_n < \infty,
\]
and if for any $0 < \theta \leq 1/2$ the following conditions hold:
\begin{align*}
&\sum_{n=1}^\infty t_n \xi_n = \infty,
\\
&\delta_n = o(t_n \xi_n),
\\
&\eta_n = o(t_n \xi_n).
\end{align*}
\end{referencetheorem}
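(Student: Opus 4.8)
The plan is to prove that the nested best-approximation errors $E_n$ tend to $0$, from which $G_n \to f$ follows at once since $\n{f - G_n} \le (1 + \eta_n) E_n$. Because the subspaces $\Phi_n$ are nested, $\{E_n\}$ is non-increasing and converges to some $\beta \ge 0$; the assumption $\eta_0 < \infty$ gives $\n{f_n} \le (1 + \eta_0) E_n \le (1 + \eta_0)\n{f}$, so the remainders are uniformly bounded, say by $M$. I would first note that the hypotheses force $\delta_n \to 0$ and $\eta_n \to 0$: since $\rho(\xi_n)/\xi_n = \theta t_n \le 1/2$ with $\rho(u)/u$ non-decreasing and vanishing at $0$, the sequence $\{t_n\xi_n\}$ stays bounded, so $\delta_n = o(t_n\xi_n)$ and $\eta_n = o(t_n\xi_n)$ genuinely tend to $0$. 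It then suffices to rule out $\beta > 0$, which I assume for contradiction, so that $\n{f_{n-1}} \ge E_{n-1} \ge \beta$ for every $n$.

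The engine is the smoothness inequality: for $x \ne 0$, a functional $F$ with $\n{F} \le 1$, an element $g$ with $\n{g} \le 1$, and $\lambda \ge 0$,
\[
\n{x - \lambda g} \le 2\n{x} \br{1 + \rho\br{\lambda/\n{x}}} - F(x) - \lambda F(g),
\]
which follows from the definition of $\rho$ together with $\n{x + \lambda g} \ge F(x + \lambda g)$. Applying this with $x = f_{n-1}$, $F = F_{n-1}$, $g = \phi_n$, using $F_{n-1}(f_{n-1}) \ge (1 - \delta_{n-1})\n{f_{n-1}}$ and $F_{n-1}(\phi_n) \ge t_n \sup_{g\in\D} F_{n-1}(g) \ge 0$, and combining with $\n{f_n} \le (1 + \eta_n) E_n \le (1 + \eta_n)\n{f_{n-1} - \lambda \phi_n}$, I obtain the one-step recursion
\[
\n{f_n} \le (1 + \eta_n)\Bigl[ \n{f_{n-1}}\br{1 + \delta_{n-1}} + 2\n{f_{n-1}}\rho\br{\lambda/\n{f_{n-1}}} - \lambda\, t_n \sup_{g\in\D} F_{n-1}(g) \Bigr].
\]

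Next I would bound the greedy gain $\sup_{g\in\D} F_{n-1}(g)$ from below. Fixing a small $\epsilon > 0$ and using $\overline{\text{span}}\,\D = X$, choose $h \in A_0(\D)$ with $h = A\,p$, $p \in A_1(\D)$, and $\n{f - h} < \epsilon$; then $\sup_{g\in\D} F_{n-1}(g) = \sup_{\phi \in A_1(\D)} F_{n-1}(\phi) \ge F_{n-1}(h)/A \ge (F_{n-1}(f) - \epsilon)/A$, while $F_{n-1}(f) = F_{n-1}(f_{n-1}) + F_{n-1}(G_{n-1})$ since $f = f_{n-1} + G_{n-1}$. The crux of the approximate setting, and what I expect to be the main obstacle, is controlling $F_{n-1}(G_{n-1})$: this term vanishes for the exact WCGA, where $f_{n-1}$ is genuinely orthogonal to $\Phi_{n-1}$, but need not vanish here. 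Testing near-optimality $\n{f - (G_{n-1} + sG)} \ge E_{n-1} \ge \n{f_{n-1}}/(1 + \eta_{n-1})$ against the smoothness inequality for every $G \in \Phi_{n-1}$ and optimizing in $s$ yields $|F_{n-1}(G)| \le C(\delta_{n-1}, \eta_{n-1})\n{f_{n-1}}$ with $C \to 0$ as the errors vanish. Since $\delta_{n-1}, \eta_{n-1} \to 0$ and $\n{f_{n-1}} \ge \beta$, this makes $F_{n-1}(f) \ge \beta/2$ for all large $n$, hence $\sup_{g\in\D} F_{n-1}(g) \ge c_0 > 0$ with $c_0$ independent of $n$.

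Finally I would choose the step $\lambda = \xi_n \n{f_{n-1}}$, so that $\rho(\lambda/\n{f_{n-1}}) = \rho(\xi_n) = \theta t_n \xi_n$, and substitute into the recursion. Using $\n{f_{n-1}} \le M$, $\eta_n \le \eta_0$, and $\sup_{g\in\D} F_{n-1}(g) \ge c_0$, the $t_n\xi_n$-terms combine to
\[
\n{f_n} \le \n{f_{n-1}} - c_0\, t_n \xi_n + 2\theta M (1 + \eta_0)\, t_n\xi_n + r_n,
\]
where $r_n = O(\delta_{n-1} + \eta_n) = o(t_n\xi_n)$ collects the remaining error contributions. Here I exploit that the hypotheses hold for \emph{every} $0 < \theta \le 1/2$: fixing $\theta$ small enough that $2\theta M(1 + \eta_0) \le c_0/2$ gives $\n{f_n} \le \n{f_{n-1}} - \tfrac{c_0}{2} t_n \xi_n + r_n$. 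Telescoping from $1$ to $N$ and using $\n{f_N} \ge 0$ produces $\tfrac{c_0}{2}\sum_{n=1}^N t_n\xi_n \le \n{f} + \sum_{n=1}^N r_n$; since $r_n = o(t_n\xi_n)$, the right-hand side is eventually dominated by $\tfrac{c_0}{4}\sum t_n\xi_n$, forcing $\sum t_n\xi_n < \infty$ and contradicting $\sum t_n\xi_n = \infty$. Therefore $\beta = 0$, i.e. $E_n \to 0$ and $G_n \to f$. The delicate points throughout are the approximate-orthogonality estimate of the third step and the bookkeeping that keeps every error term inside $o(t_n\xi_n)$, both of which are exactly where the quantitative hypotheses $\delta_n, \eta_n = o(t_n\xi_n)$ are consumed.
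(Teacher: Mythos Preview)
Your argument is correct and mirrors the paper's proof of Theorem~1 (the paper does not prove Theorem~B itself --- it is cited from~\cite{temlyakov_gtabsa} --- but Theorem~1 is its direct generalization): the same contradiction hypothesis $E_n\ge\alpha>0$, the same approximate-orthogonality bound on $F_{n-1}(G_{n-1})$ (the paper's Lemma~1), the same lower bound on the greedy gain via an element $h/A\in A_1(\D)$ approximating $f$ (Lemma~2), and the same appeal to $\sum t_n\xi_n=\infty$ to force a contradiction. The only structural difference is cosmetic: the paper packages the recursion multiplicatively as $E_{n_{k+1}}\le E_{n_k}(1-\theta t_{n_k+1}\xi_{n_k+1})$ and uses divergence of the infinite product, whereas you telescope an additive inequality on $\n{f_n}$.

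Two small points to tighten. First, the bound you write as $|F_{n-1}(G)|\le C(\delta_{n-1},\eta_{n-1})\n{f_{n-1}}$ should carry $\n{G}$, not $\n{f_{n-1}}$; this is harmless since $\n{G_{n-1}}\le\n{f}+\n{f_{n-1}}$ is uniformly bounded. Second, the termwise claim $r_n=O(\delta_{n-1}+\eta_n)=o(t_n\xi_n)$ is not literally justified for the $\delta_{n-1}$ part, since the hypothesis gives $\delta_{n-1}=o(t_{n-1}\xi_{n-1})$ with a shifted index; however, your telescoped conclusion only needs $\sum_{n\le N}r_n\le\tfrac{c_0}{4}\sum_{n\le N}t_n\xi_n$ for large $N$, and that does follow because the partial sums $\sum_{n\le N}t_{n-1}\xi_{n-1}$ and $\sum_{n\le N}t_n\xi_n$ differ by a single bounded term. (The paper avoids this bookkeeping by stating its hypotheses with the shift built in: $\delta_{n_k}=o(t_{n_k+1}\xi_{n_k+1})$.)
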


We will prove the following theorem that states that the similar result holds for 
the convergence of the gAWCGA with somewhat weaker restrictions on the approximation 
parameters. Specifically, we require the parameters to be sufficiently small only 
along some increasing sequence of natural numbers $\{n_k\}_{k=1}^\infty$.

\begin{theorem}\label{gawcga_convergence}
The gAWCGA with a weakness sequence $\{(t_n, t'_n)\}_{n=1}^\infty$, 
a perturbation sequence $\{(\delta_n, \delta'_n)\}_{n=0}^{\infty}$,
and an error sequence $\{(\eta_n, \eta'_n)\}_{n=1}^{\infty}$
converges for any uniformly smooth Banach space $X$, any dictionary $\D$, 
and any element $f \in X$ if 
\begin{equation}
\label{gawcga_eta_boundedness}
\eta_0 = \sup_{n \geq 1} \eta_n < \infty, \ \ 
\lim_{n\to\infty}\eta'_n = 0, 
\end{equation}
and if there exists a subsequence $\{n_k\}_{k=1}^{\infty}$
such that for any $0 < \theta \leq 1/2$ the following conditions hold:
\begin{align}
\label{gawcga_sum_t_xi=infty}
&\sum_{k=1}^\infty t_{n_k+1} \xi_{n_k+1} = \infty,
\\
\label{gawcga_t'=o(t)}
&t'_{n_k+1} = o(t_{n_k+1}),
\\
\label{gawcga_delta=o(t_xi)}
&\delta_{n_k} = o(t_{n_k+1} \xi_{n_k+1}),
\\
\label{gawcga_delta'=o(t_xi)}
&\delta'_{n_k} = o(t_{n_k+1} \xi_{n_k+1}),
\\
\label{gawcga_eta=o(t_xi)}
&\eta_{n_k} = o(t_{n_k+1} \xi_{n_k+1}),
\\
\label{gawcga_eta'=o(t_xi)}
&\eta'_{n_k} = o(t_{n_k+1} \xi_{n_k+1}).
\end{align}
\end{theorem}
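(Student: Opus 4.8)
The plan is to adapt the standard convergence argument for the WCGA (Theorem~A) to account for the absolute and relative perturbations, following the strategy that was used for the AWCGA (Theorem~B), but localizing all the smallness requirements to the subsequence $\{n_k\}$. First I would establish the basic monotonicity and boundedness facts: since $G_n \in \Phi_{n+1}$ we have $E_{n+1} \le E_n$, and from~\eqref{gawcga_approximation} the remainders satisfy $\n{f_{n+1}} \le (1+\eta_{n+1})E_{n+1} + \eta'_{n+1} \le (1+\eta_0)\n{f_n} + \eta'_0$; combined with $E_n \le \n{f_0}$ this shows $\{\n{f_n}\}$ is bounded, say by some $C$. The sequence $\{\n{f_n}\}$ need not be monotone because of the errors, but using $\lim \eta'_n = 0$ and a standard argument one sees that $\alpha := \lim_{n\to\infty} E_n$ exists (the $E_n$ are non-increasing) and that $\limsup_n \n{f_n} \le (1+\eta_0)\alpha$; the goal is to show $\alpha = 0$.

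Next I would derive the key recursive inequality along the subsequence. Fix $\theta \in (0,1/2]$, let $k$ be large, and write $n = n_k$, so the $(n+1)$-st step uses the functional $F_n$ with $F_n(f_n) \ge (1-\delta_n)\n{f_n} - \delta'_n$ and the element $\phi_{n+1}$ with $F_n(\phi_{n+1}) \ge t_{n+1}\sup_{g\in\D}F_n(g) - t'_{n+1}$. Using the definition of the modulus of smoothness applied to $f_n$ and $\lambda \phi_{n+1}$, together with the fact that $E_{n+1} \le \n{f_n - \lambda \phi_{n+1}}$ for any scalar $\lambda$ (since $\lambda\phi_{n+1}\in\Phi_{n+1}$ and then $G_n+\lambda\phi_{n+1}$ ranges appropriately — more precisely one compares against the element of $\Phi_n$ achieving near-best approximation plus the new direction), I would obtain, for a well-chosen $\lambda = \xi_{n+1}\n{f_n}$ (or a comparable multiple), an inequality of the shape
\[
\n{f_{n+1}} \le \n{f_n}\Bigl(1 - \lambda t_{n+1}\bigl(A_n - \varepsilon_n\bigr) + 2\rho(\lambda)/\n{f_n} \Bigr) + (\text{absolute error terms}),
\]
where $A_n$ is (a lower bound for) $\sup_{g\in\D} F_n(g)$ and $\varepsilon_n$ collects the relative contributions $t'_{n+1}/t_{n+1}$, $\delta_n$, and so on. The crucial point, exactly as in Temlyakov's proof, is to bound $\sup_{g\in\D}F_n(g)$ from below: if $\alpha > 0$ then for large $n$ one has $\n{f_n}$ bounded below (using $\n{f_n} \ge E_n \ge \alpha$), and since $f/(f\text{-norm})$-type comparisons show $f$ is well approximated by $A_1(\D)$-combinations, one gets $\sup_{g\in\D}F_n(g) \ge c\,\n{f_n}$ for a fixed $c > 0$ depending on $f$ and $\alpha$ but not on $n$. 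This is where the assumption $\overline{\operatorname{span}}\,\D = X$ enters.

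Substituting $\rho(\xi_{n+1}) = \theta t_{n+1}\xi_{n+1}$ and using~\eqref{gawcga_t'=o(t)},~\eqref{gawcga_delta=o(t_xi)},~\eqref{gawcga_delta'=o(t_xi)},~\eqref{gawcga_eta=o(t_xi)},~\eqref{gawcga_eta'=o(t_xi)} to absorb all the error terms into a factor that is $o(t_{n_k+1}\xi_{n_k+1})$, I would arrive at
\[
\n{f_{n_k+1}} \le \n{f_{n_k}}\bigl(1 - c'\, t_{n_k+1}\xi_{n_k+1}\bigr) + o(t_{n_k+1}\xi_{n_k+1})
\]
for all large $k$, with $c' > 0$. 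Chaining this over $k$, using that $\n{f_n}$ does not grow too fast on the intermediate blocks (the boundedness and the $\eta'_n \to 0$ control), and invoking the divergence~\eqref{gawcga_sum_t_xi=infty} $\sum_k t_{n_k+1}\xi_{n_k+1} = \infty$ forces $\n{f_{n_k}} \to 0$; since the full sequence $\{\n{f_n}\}$ has $\limsup \le (1+\eta_0)\alpha$ and $\alpha = \lim E_n \le \liminf \n{f_{n_k}} = 0$, we conclude $\n{f_n}\to 0$, i.e. $G_n \to f$. The main obstacle I anticipate is the bookkeeping in the recursive inequality: one must choose $\lambda$ (equivalently the scale relative to $\xi_{n+1}$ and $\n{f_n}$) so that simultaneously the $\rho(\lambda)$ term is controlled via the defining relation for $\xi_{n+1}$ and the absolute errors $\delta'_{n_k}$, $\eta'_{n_k}$ remain of order $o(t_{n_k+1}\xi_{n_k+1}\cdot\n{f_{n_k}})$ — which is delicate precisely when $\n{f_{n_k}}$ is small, so the argument must be run as a proof by contradiction assuming $\alpha > 0$ and hence $\n{f_{n_k}}$ bounded below, making all these terms genuinely negligible.
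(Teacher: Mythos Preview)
Your overall strategy coincides with the paper's: argue by contradiction assuming $\alpha := \lim_n E_n > 0$, use an $A_1(\D)$ approximant $h$ of $f$ to get a uniform lower bound on $\sup_{g\in\D}F_n(g)$, apply the modulus-of-smoothness inequality at $n = n_k$ with the scale $\xi_{n_k+1}$, absorb all perturbations via~\eqref{gawcga_t'=o(t)}--\eqref{gawcga_eta'=o(t_xi)}, and chain using~\eqref{gawcga_sum_t_xi=infty}. The paper organizes this through three lemmas (a bound on $|F_n(\phi)|$ for $\phi\in\Phi_n$, a lower bound on $|F_n(\phi_{n+1})|$, and the resulting upper bound on $E_m$ for $m>n$), but the content is the same as what you outline.

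There is one genuine bookkeeping issue in your chaining step. You write the recursive inequality for $\n{f_{n_k+1}}$, but what the smoothness estimate actually yields is an upper bound on $E_{n_k+1}$ (since $E_{n_k+1}\le\n{f_{n_k}-\lambda\phi_{n_k+1}}$). To pass to $\n{f_{n_k+1}}$ you would need $\n{f_{n_k+1}}\le(1+\eta_{n_k+1})E_{n_k+1}+\eta'_{n_k+1}$, and the hypotheses give no smallness for $\eta_{n_k+1}$, $\eta'_{n_k+1}$ --- only for $\eta_{n_k}$, $\eta'_{n_k}$. Your proposed fix (``$\n{f_n}$ does not grow too fast on intermediate blocks'') does not close this, because the uncontrolled factor $(1+\eta_0)$ would recur at every step of the chain. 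The paper avoids this by running the recursion on the monotone quantity $E_n$: one gets $E_{n_{k+1}}\le E_{n_k+1}\le\n{f_{n_k}}(1-c\,t_{n_k+1}\xi_{n_k+1})+o(\cdot)$ automatically, and then uses $\n{f_{n_k}}\le(1+\eta_{n_k})E_{n_k}+\eta'_{n_k}$ only at the controlled index $n_k$ to return to $E_{n_k}$, arriving at $E_{n_{k+1}}\le E_{n_k}(1-\theta\,t_{n_k+1}\xi_{n_k+1})$. Once you make this switch your argument goes through.
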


If modulus of smoothness of a space is of a nontrivial power type
the previous theorems can be rewritten in a form that states
necessary and sufficient conditions for the convergence.
The following result is Corollary~2.1 from~\cite{temlyakov_gabs}.
\begin{referencetheorem}
The WCGA with a weakness sequence $\{t_n\}_{n=1}^\infty$ converges 
for any uniformly smooth Banach space $X \in \mathcal{P}_q$, 
any dictionary $\D$, any element $f \in X$ if and only if
\[
\sum_{n=1}^\infty t^p_n = \infty,
\]
where $p = q/(q-1)$.
\end{referencetheorem}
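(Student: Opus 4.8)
The plan is to deduce this from Theorem~A together with the defining relation $\rho(\xi_n)=\theta t_n\xi_n$, proving the two implications separately.

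For the ``if'' part, fix any uniformly smooth $X\in\mathcal{P}_q$, so $\rho(u)\le\gamma u^q$ for some $\gamma=\gamma(X)>0$, and fix $0<\theta\le1/2$. For every $n$ with $t_n>0$ the relation $\theta t_n\xi_n=\rho(\xi_n)\le\gamma\xi_n^{q}$ gives $\xi_n^{q-1}\ge\theta t_n/\gamma$, hence
\[
t_n\xi_n\ge(\theta/\gamma)^{1/(q-1)}\,t_n^{1+1/(q-1)}=(\theta/\gamma)^{1/(q-1)}\,t_n^{p},\qquad p=\tfrac{q}{q-1},
\]
while indices with $t_n=0$ give $\xi_n=0$ and contribute nothing. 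Thus $\sum_n t_n\xi_n\ge(\theta/\gamma)^{1/(q-1)}\sum_n t_n^{p}=\infty$ as soon as $\sum_n t_n^{p}=\infty$; since $\theta\in(0,1/2]$ was arbitrary, Theorem~A gives convergence of the WCGA for this $X$, every dictionary, and every element.

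For the ``only if'' part I would argue by contraposition: assuming $\sum_n t_n^{p}<\infty$, I will exhibit a space in $\mathcal{P}_q$, a dictionary, and an element admitting a non-convergent realization of the WCGA. Take $X=\ell_q$ (which lies in $\mathcal{P}_q$, since $\ell_q$ embeds isometrically in $L_q$ and hence $\rho_{\ell_q}(u)\le q^{-1}u^{q}$), let $\D=\{\pm e_n\}_{n\ge1}$ be the normalized canonical basis, and put
\[
f=e_1+\sum_{n=1}^{\infty}t_n^{1/(q-1)}e_{n+1},
\]
so that $\n{f}_q^q=1+\sum_n t_n^{q/(q-1)}=1+\sum_n t_n^{p}<\infty$, i.e.\ $f\in\ell_q$. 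The ingredients are that in $\ell_q$ the unique norming functional of $x=\sum x_ie_i$ has $i$-th coordinate $\operatorname{sgn}(x_i)|x_i|^{q-1}/\n{x}_q^{q-1}$, and that the Chebyshev projection of a vector onto the span of a set of basis vectors is the corresponding coordinate projection. With these I would check inductively that one may choose $\phi_n=e_{n+1}$ at step $n$: after $n-1$ such steps the remainder equals $f_{n-1}=e_1+\sum_{j\ge n+1}t_{j-1}^{1/(q-1)}e_j$, whose largest coordinate is the untouched $e_1$, so $\sup_{g\in\D}F_{n-1}(g)=\n{f_{n-1}}_q^{-(q-1)}$ and the weakness requirement $F_{n-1}(\phi_n)\ge t_n\sup_{g\in\D}F_{n-1}(g)$ reduces to $\bigl(t_n^{1/(q-1)}\bigr)^{q-1}\ge t_n$, which holds with equality. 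Then $G_n=\sum_{j=1}^{n}t_j^{1/(q-1)}e_{j+1}$ and $f_n=e_1+\sum_{j>n}t_j^{1/(q-1)}e_{j+1}$, so $\n{f_n}_q\ge|(f_n)_1|=1$ for all $n$; hence $G_n\not\to f$ along this realization and the WCGA of $f$ does not converge.

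The calculations with the norming functional and the Chebyshev projection in $\ell_q$ are routine; the point that requires care is verifying that $\phi_n=e_{n+1}$ is admissible for \emph{every} $n$, i.e.\ carrying the induction that keeps $e_1$ among the largest coordinates of $f_{n-1}$ throughout, which makes the weakness inequality collapse to the identity $(t_n^{1/(q-1)})^{q-1}=t_n$. After that, the bound $\n{f_n}_q\ge1$ rules out convergence at once.
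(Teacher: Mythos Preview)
Your argument is correct and matches the approach implicit in the paper. The paper itself does not prove this reference theorem (it is cited as Corollary~2.1 from~\cite{temlyakov_gabs}), but the proof of the more general Theorem~\ref{theorem_gawcga_convergence_in_P_q} contains exactly your two ingredients: for sufficiency it derives $t_n^p\le(\gamma/\theta)^{p-1}t_n\xi_n$ from $\rho(\xi_n)=\theta t_n\xi_n$ and $\rho(u)\le\gamma u^q$, then appeals to the general sufficiency result; for necessity it works in $\ell_q$ with the canonical basis, takes $f=e_0+\sum_{j\ge1}t_j^{p/q}e_j$ (note $p/q=1/(q-1)$, so this is your element up to an index shift), chooses $\phi_n=e_n$, and observes that $\n{f_n}_q\ge1$ for all $n$.
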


The next theorem gives the necessary and sufficient conditions for the 
convergence of the AWCGA (see Theorem~1 from~\cite{dereventsov_awcgausbs}).
\begin{referencetheorem}
The AWCGA with a weakness sequence $\{t_n\}_{n=1}^\infty$, 
a perturbation sequence $\{\delta_n\}_{n=0}^{\infty}$,
and an error sequence $\{\eta_n\}_{n=1}^{\infty}$
converges for any uniformly smooth Banach space $X \in \mathcal{P}_q$, 
any dictionary $\D$, and any element $f \in X$ if and only if
\[
\eta_0 = \sup_{n \geq 1} \eta_n < \infty,
\]
and if there exists a subsequence $\{n_k\}_{k=1}^\infty$ such that 
the following conditions hold:
\begin{align*}
&\sum_{k=1}^\infty t^p_{n_k+1} = \infty,
\\
&\delta_{n_k} = o(t^p_{n_k+1}),
\\
&\eta_{n_k} = o(t^p_{n_k+1}),
\end{align*}
where $p = q/(q-1)$.
\end{referencetheorem}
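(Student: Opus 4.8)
The plan is to establish the two implications separately, with the bridge between them being the elementary consequence of the power-type hypothesis that, for $X \in \mathcal{P}_q$ and a fixed $0 < \theta \le 1/2$, the sequence $\{\xi_n\}$ determined by $\rho(\xi_n) = \theta t_n\xi_n$ satisfies $t_n\xi_n \asymp t_n^p$ with $p = q/(q-1)$. Indeed, $\rho(u) \le \gamma u^q$ and the defining relation give $\theta t_n\xi_n = \rho(\xi_n) \le \gamma\xi_n^q$, whence $\xi_n \ge (\theta t_n/\gamma)^{1/(q-1)}$ and $t_n\xi_n \ge c\,t_n^p$ for some $c = c(\gamma,q,\theta) > 0$; in the space $\ell_q$, where $\rho(u) \asymp u^q$, the reverse bound $t_n\xi_n \le C\,t_n^p$ holds as well. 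This dictionary between the ``$\xi$-conditions'' of the general uniformly smooth theory and the ``$t^p$-conditions'' in the statement is what drives both directions.

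For sufficiency, assume $\eta_0 < \infty$ together with a subsequence $\{n_k\}$ satisfying $\sum_k t_{n_k+1}^p = \infty$, $\delta_{n_k} = o(t_{n_k+1}^p)$, and $\eta_{n_k} = o(t_{n_k+1}^p)$. Feeding the lower bound $t_n\xi_n \ge c\,t_n^p$ into these relations along $\{n_k\}$ yields at once $\sum_k t_{n_k+1}\xi_{n_k+1} = \infty$ and $\delta_{n_k} = o(t_{n_k+1}\xi_{n_k+1})$, $\eta_{n_k} = o(t_{n_k+1}\xi_{n_k+1})$. Convergence now follows from the sufficient conditions for the algorithm in an arbitrary uniformly smooth space, namely Theorem~\ref{gawcga_convergence} with the absolute-error parameters $t'_n = \delta'_n = \eta'_n = 0$. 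This direction is routine once the estimate $t_n\xi_n \gtrsim t_n^p$ is available.

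For necessity I would argue by contraposition, producing a space in $\mathcal{P}_q$ --- concretely $\ell_q$ with its canonical basis as dictionary, so that $\rho(u) \asymp u^q$ and therefore $t_n\xi_n \asymp t_n^p$ --- an element $f$, and a realization of the AWCGA for which $\n{f_n} \not\to 0$. The first step is a combinatorial reformulation of the negated hypothesis: the nonexistence of a good subsequence is equivalent to the existence of $\varepsilon_0 > 0$ with
\[
\sum_{n \in S_{\varepsilon_0}} t_{n+1}^p < \infty,
\qquad
S_{\varepsilon_0} = \{ n : \delta_n \le \varepsilon_0\,t_{n+1}^p \ \text{and}\ \eta_n \le \varepsilon_0\,t_{n+1}^p \},
\]
the backward direction coming from a diagonal extraction (if the sum diverged for every $\varepsilon_0$, one could assemble a subsequence witnessing the hypothesis). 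The separate failure $\eta_0 = \infty$ is disposed of directly, since an unbounded relative error in the approximant permits a choice of $G_n$ keeping $\n{f - G_n}$ bounded away from zero.

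The construction then specifies $f = \sum_i a_i e_i \in \ell_q$ and chooses, at each step, a functional within the $\delta$-tolerance, a basis index within the $t$-tolerance, and an approximant within the $\eta$-tolerance. Because the norming functional of an element of $\ell_q$ lies in $\ell_p$, a weak step can be forced to select a coordinate of size only $\sim t_n^{1/(q-1)}$ times the largest, so that the best-approximation step lowers $\n{f_n}^q$ by an amount of order $t_n^p$ (up to a bounded factor) on the small-error indices $S_{\varepsilon_0}$; the finiteness of $\sum_{S_{\varepsilon_0}} t_{n+1}^p$ then caps the total decrease there, while on the complementary indices the admissible perturbation or approximant error is spent to prevent any net progress. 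Arranging the $\{a_i\}$ and the realization so that these two effects combine into $\liminf_n \n{f_n} > 0$ is the main obstacle: one has to control the three independent tolerances simultaneously and verify that the per-step loss estimate --- which rests squarely on the exact power type of $\rho$ and the $\ell_q$--$\ell_p$ duality --- is sharp enough to block convergence. I expect this single divergent realization, rather than the reduction or the sufficiency argument, to be the crux of the proof.
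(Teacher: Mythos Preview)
Your plan matches the paper's approach. Note first that this statement is only \emph{cited} here (as Theorem~1 of \cite{dereventsov_awcgausbs}); the present paper proves its generalization, Theorem~\ref{theorem_gawcga_convergence_in_P_q}, and specializing that proof to $t'_n=\delta'_n=\eta'_n=0$ recovers the argument you outline. Your sufficiency step (the inequality $t_n\xi_n \ge c\,t_n^p$ feeding into Theorem~\ref{gawcga_convergence}) is exactly what the paper does. Your combinatorial reduction for necessity --- extracting a single threshold $\varepsilon_0$ (the paper's $\alpha$) so that the ``good'' index set $S_{\varepsilon_0}$ carries finite $\sum t^p$, via a diagonal argument --- is also the paper's move.

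The one place your sketch diverges from the paper is the mechanism of the explicit $\ell_q$ counterexample. You frame it as bounding the per-step drop of $\|f_n\|^q$ by $O(t_n^p)$ on good indices and absorbing it with the error budget on bad ones. The paper's construction is sharper and avoids any decrease estimate: it places the bulk of $f$'s mass on the \emph{bad} set $\Lambda_1$, puts coefficients $t_j^{p/q}$ on the good set $\Lambda_2$, and reserves two auxiliary coordinates $e_0,e_1$ outside $\mathrm{supp}\,f$. At step $n+1$ it takes a perturbed functional $F_n$ that loads $(\delta_n)^{1/p}$ onto $e_0$ and $(\eta_n)^{1/p}$ onto $e_1$; this stays within the $\delta_n$-tolerance, and whenever $n+1\in\Lambda_1$ one of $F_n(e_0),F_n(e_1)$ already dominates $t_{n+1}\sup_g F_n(g)$, so $\phi_{n+1}\in\{e_0,e_1\}$ is admissible. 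The upshot is that no index in $\Lambda_1$ is \emph{ever} selected, and $\|f_n\|$ stays bounded below by the fixed $\Lambda_1$-mass. This ``parking coordinate'' trick is the concrete content of the crux you correctly flagged.
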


We will prove the following result that states the necessary and 
sufficient conditions for the convergence of the gAWCGA.
\begin{theorem}\label{theorem_gawcga_convergence_in_P_q}
The gAWCGA with a weakness sequence $\{(t_n, t'_n)\}_{n=1}^\infty$, 
a perturbation sequence $\{(\delta_n, \delta'_n)\}_{n=0}^{\infty}$,
and an error sequence $\{(\eta_n, \eta'_n)\}_{n=1}^{\infty}$
converges for any uniformly smooth Banach space $X \in \mathcal{P}_q$, 
any dictionary $\D$, and any element $f \in X$ if and only if 
\begin{equation}
\label{gawcga_necessity_in_P_q_eta_boundedness}
\eta_0 = \sup_{n \geq 1} \eta_n < \infty, \ \ 
\lim_{n\to\infty}\eta'_n = 0, 
\end{equation}
and if there exists a subsequence $\{n_k\}_{k=1}^{\infty}$ such that
the following conditions hold:
\begin{align}
\label{gawcga_convergence_in_P_q_sum_t_p=infty}
&\sum_{k=1}^\infty t_{n_k+1}^p = \infty,
\\
\label{gawcga_convergence_in_P_q_t'=o(t)}
&t'_{n_k+1} = o(t_{n_k+1}),
\\
\label{gawcga_convergence_in_P_q_delta=o(t_p)}
&\delta_{n_k} = o(t_{n_k+1}^p),
\\
\label{gawcga_convergence_in_P_q_delta'=o(t_p)}
&\delta'_{n_k} = o(t_{n_k+1}^p),
\\
\label{gawcga_convergence_in_P_q_eta=o(t_p)}
&\eta_{n_k} = o(t_{n_k+1}^p),
\\
\label{gawcga_convergence_in_P_q_eta'=o(t_p)}
&\eta'_{n_k} = o(t_{n_k+1}^p),
\end{align}
where $p = q/(q-1)$.
\end{theorem}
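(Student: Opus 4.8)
\emph{Sufficiency.} For $X \in \mathcal P_q$ we have $\rho(u) \le \gamma u^q$, so from $\rho(\xi_n) = \theta t_n\xi_n$ we obtain $\theta t_n\xi_n \le \gamma\xi_n^q$, hence $\xi_n \ge (\theta t_n/\gamma)^{1/(q-1)}$ and $t_n\xi_n \ge c_\theta\,t_n^p$ with $p = q/(q-1)$ and $c_\theta = (\theta/\gamma)^{1/(q-1)} > 0$. Therefore \eqref{gawcga_necessity_in_P_q_eta_boundedness} is exactly \eqref{gawcga_eta_boundedness}, and on any subsequence $\{n_k\}$ the conditions \eqref{gawcga_convergence_in_P_q_sum_t_p=infty}--\eqref{gawcga_convergence_in_P_q_eta'=o(t_p)} imply \eqref{gawcga_sum_t_xi=infty}--\eqref{gawcga_eta'=o(t_xi)} for every $0 < \theta \le 1/2$. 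The sufficiency now follows from Theorem~\ref{gawcga_convergence}.

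\emph{Necessity; reduction to a combinatorial dichotomy.} We argue by contraposition: assuming the gAWCGA converges for all $X \in \mathcal P_q$, all $\D$, and all $f$, we verify the right-hand side of the theorem. Suppose it fails. If \eqref{gawcga_necessity_in_P_q_eta_boundedness} fails we treat this at the end. Otherwise, for $\epsilon > 0$ set
\[
N(\epsilon) = \bigl\{\, n \ge 1 :\ t'_{n+1} \le \epsilon\, t_{n+1}\ \text{and}\ \max\{\delta_n, \delta'_n, \eta_n, \eta'_n\} \le \epsilon\, t_{n+1}^p \,\bigr\}.
\]
A diagonal argument over the values $\epsilon = 1/j$ shows that a subsequence satisfying \eqref{gawcga_convergence_in_P_q_sum_t_p=infty}--\eqref{gawcga_convergence_in_P_q_eta'=o(t_p)} exists if and only if $\sum_{n \in N(\epsilon)} t_{n+1}^p = \infty$ for every $\epsilon > 0$. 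So there is $\epsilon_0 > 0$ with $\sigma := \sum_{n \in N(\epsilon_0)} t_{n+1}^p < \infty$, and since $N(\epsilon') \subseteq N(\epsilon_0)$ for $\epsilon' \le \epsilon_0$ we may take $\epsilon_0$ as small as we please. If in addition $\sum_n t_{n+1}^p < \infty$, the WCGA with weakness $\{t_n\}$ already diverges in a suitable $X \in \mathcal P_q$ (the condition $\sum_n t_n^p = \infty$ being necessary, as recalled above), and its exact realization is a legitimate realization of the gAWCGA; so we may assume $\sum_n t_{n+1}^p = \infty$, hence $\sum_{n \notin N(\epsilon_0)} t_{n+1}^p = \infty$.

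\emph{Necessity; the divergent example.} Take $X = \ell_q$ (so $X \in \mathcal P_q$, with $X^* = \ell_p$), $\D = \{\pm e_k\}_{k \ge 1}$, and let $f \in \ell_q$ consist of a ``flat head'' $\sum_{k=1}^{N} e_k$ and a ``slowly decaying tail'' $\sum_{k > N} a_k e_k$ (with $a_k \downarrow 0$ and $\sum_k a_k^q < \infty$), the length $N$ and the profile $\{a_k\}$ being chosen in terms of $\epsilon_0$ and $\sigma$ only. We run the gAWCGA so that every $G_j$ is a best approximant from $\Phi_j$ except at the ``wasting'' steps described below; then every remainder $f_{n-1}$ that occurs is again of this form with its head intact, so $\sup_{g \in \D} F_{f_{n-1}}(g) = \n{F_{f_{n-1}}}_{\ell_\infty} \le N^{-(q-1)/q}$, which for $N$ large is $\le \epsilon_0$ and controls the greedy threshold in \eqref{gawcga_approximant}. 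At every step $n$ with $n - 1 \notin N(\epsilon_0)$ one of the five inequalities defining $N(\epsilon_0)$ is violated at $n-1$, and we spend exactly that slack to make \eqref{gawcga_approximant} select a $\phi_n$ lying outside $\operatorname{supp} f$ or already in $\Phi_{n-1}$, so that $E_n = E_{n-1}$: if $t'_n > \epsilon_0 t_n$ we take such a $\phi_n$ outright; if $\delta_{n-1}$ or $\delta'_{n-1}$ is large we perturb $F_{n-1}$ in \eqref{gawcga_functional} toward such a coordinate; if $\eta_{n-1}$ or $\eta'_{n-1}$ is large we instead tilt $f_{n-1}$ toward a coordinate of $\Phi_{n-1}$ within the tolerance of \eqref{gawcga_approximation} at step $n-1$. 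At the remaining steps ($n - 1 \in N(\epsilon_0)$) the greedy step may select a genuine coordinate, but since $\sup_{g\in\D}F_{f_{n-1}}(g) \le \epsilon_0$ we may steer it to a far-out tail coordinate of mass $O(t_n^p)$, so the total mass removed over all such steps is $O(\sigma)$, which for $N$ large is below $\tfrac12 \n{f}^q$. Hence $E_n \ge 2^{-1/q}\n{f} > 0$ for all $n$, so $\n{f - G_n} \ge E_n \not\to 0$ and the gAWCGA of $f$ diverges. Finally, if \eqref{gawcga_necessity_in_P_q_eta_boundedness} fails: when $c := \limsup_n \eta'_n > 0$, rescale $f$ so that $\n{f} < c/2$ and put $G_n = 0$ at the infinitely many $n$ with $\eta'_n \ge c/2$ (legitimate, since then $\n{f} \le (1+\eta_n)E_n + \eta'_n$); when $\eta_0 = \infty$, take $f$ of the above form with $N$ so large that $E_n$ decays slowly against the sparse indices where $\eta_n$ is large, and at those indices set $\n{f - G_n} = (1+\eta_n)E_n + \eta'_n$. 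In either case $\limsup_n \n{f - G_n} > 0$.

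\emph{Main obstacle.} All the difficulty lies in the example: (i) choosing $N$ and the tail profile $\{a_k\}$ so that every remainder stays flat-headed and the bound $\sup_{g\in\D}F_{f_{n-1}}(g) \le \epsilon_0$ persists at every step, in particular so that the finitely many ``good'' steps with $t_n$ not small, which are forced to select head coordinates, cannot exhaust the head; (ii) carrying out the five ``wasting'' estimates and checking that the slack $\epsilon_0 t_n^p$ (resp.\ $\epsilon_0 t_n$) available at a bad index suffices against the greedy threshold $t_n\sup_{g\in\D}F_{n-1}(g)$ --- here the exact exponent $p = q/(q-1)$ enters through a uniform-convexity estimate for the $\ell_p$-ball, and one must verify that the $t_n^{1/(q-1)}$-factors cancel, so that no lower bound on $t_n$ is needed; (iii) the $O(t_n^p)$ bound on the mass removed at each good step together with the choice of $N$ forcing the cumulative loss below $\n{f}^q$; and (iv) respecting the index offset between the weakness $t_{n+1}$ and the parameters $\delta_n,\delta'_n,\eta_n,\eta'_n$, which is why the shifts appear in the definition of $N(\epsilon)$. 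Throughout one also has to confirm that the constructed realization obeys \eqref{gawcga_functional}--\eqref{gawcga_approximation} at every step.
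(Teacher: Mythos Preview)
Your sufficiency argument is the same as the paper's: derive $t_n\xi_n \ge c_\theta t_n^p$ from $\rho(\xi_n)=\theta t_n\xi_n$ and $\rho(u)\le\gamma u^q$, then invoke Theorem~\ref{gawcga_convergence}. Your combinatorial dichotomy for necessity also matches: the paper writes $\Lambda_1=\{n>1:\delta_{n-1}+\delta'_{n-1}\ge\alpha t_n^p\text{ or }\eta_{n-1}+\eta'_{n-1}\ge\alpha t_n^p\text{ or }t'_n\ge\alpha^{1/p}t_n\}$ and $\Lambda_2=\mathbb N\setminus\Lambda_1$, and the same diagonal argument yields an $\alpha>0$ with $\sum_{j\in\Lambda_2}t_j^p<\infty$; your $N(\epsilon_0)$ is essentially $\Lambda_2-1$.

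Where you diverge from the paper is the counterexample itself. The paper does \emph{not} use a flat head plus a generic decaying tail; it takes
\[
f=\alpha^{1/q}\beta\Bigl(\sum_{j\in\Lambda_1}a_j e_j+\sum_{j\in\Lambda_2}t_j^{p/q}e_j\Bigr)
\]
(with a suitable $\{a_j\}\in\ell_p\setminus\ell_q$ on $\Lambda_1$ and an explicit normalizing constant $\beta\le1$), and then writes down the functional $F_n$ and the remainder $f_n$ at every step in closed form. The $\delta$- and $\eta$-slack are spent by inserting $(\delta_n+\delta'_n)^{1/p}$ into the $e_0$-coordinate of $F_n$ and $(\eta_n+\eta'_n)^{1/q}$ into the $e_1$-coordinate of $f_n$; on $\Lambda_2$ one picks $\phi_{n}=e_{n}$ (the coordinate of mass $\alpha t_n^p\beta^q$), and on $\Lambda_1$ the defining inequality forces one of $e_0,e_1$ to satisfy \eqref{gawcga_approximant}. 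All of \eqref{gawcga_functional}--\eqref{gawcga_approximation} are verified by direct calculation (using Lemma~\ref{(a+b^q)^1/q<=a+b}), and $\|f_n\|\ge\beta$ falls out immediately. For the failure of \eqref{gawcga_necessity_in_P_q_eta_boundedness} the paper also gives an explicit $f=\sum a_je_j$ with $(\sum_{j>n_k}a_j^q)^{1/q}\ge 1/k$ and adds $\alpha e_1$ to $G_{n_k}$.

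Your dynamic approach is not wrong in spirit, but the pieces you defer to ``Main obstacle'' are exactly what the paper's explicit construction eliminates. In your scheme the five wasting estimates, the tail-coordinate bookkeeping (ensuring a fresh $e_j$ with $a_j^{q-1}\approx t_n$ at every good step), and the inductive control on $\|f_{n-1}\|$ (needed so that the $\delta$- and $\eta$-slacks, which scale like $\epsilon_0 t_n^p$, beat the threshold $t_n/\|f_{n-1}\|^{q-1}$) all interlock and have to be tuned simultaneously against $\epsilon_0$, $\sigma$, and $N$. This can be made to work, but it is considerably more delicate than the paper's route, which sidesteps all circularity by tailoring the coordinates of $f$ to the sequences from the outset and never needing to estimate anything beyond H\"older's inequality. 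If you want to complete your version, the cleanest fix is to replace the flat head by the paper's $\Lambda_1$-block and the generic tail by the $\Lambda_2$-block with coefficients $t_j^{p/q}$; then your ``steering'' and ``wasting'' moves become the paper's explicit choices and no tuning is required.
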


The following corollary states that if the weakness parameter 
$\{t_n\}_{n=1}^\infty$ is separated from zero (e.g. $t_n = t > 0$ for all $n$)
then the gAWCGA converges as long as $\eta'_n$ goes to zero and other
inaccuracy parameters go to zero along the same subsequence.
\begin{corollary}\label{corollary_gawcga_liminf_t_n>0}
Let $\liminf_{n\to\infty} t_n > 0$.
Then the gAWCGA with a weakness sequence $\{(t_n, t'_n)\}_{n=1}^\infty$, 
a perturbation sequence $\{(\delta_n, \delta'_n)\}_{n=0}^{\infty}$,
and a bounded error sequence $\{(\eta_n, \eta'_n)\}_{n=1}^{\infty}$
with $\lim_{n\to\infty} \eta'_n = 0$,
converges for any uniformly smooth Banach space $X \in \mathcal{P}_q$, 
any dictionary $\D$, and any element $f \in X$ if and only if
\[
\liminf_{n\to\infty} \br{t'_{n+1} + \delta_n + \delta'_n + \eta_n} = 0.
\]
\end{corollary}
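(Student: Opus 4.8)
The plan is to derive the corollary from Theorem~\ref{theorem_gawcga_convergence_in_P_q} by showing that, under the hypothesis $\liminf_{n\to\infty} t_n > 0$, the list of conditions \eqref{gawcga_convergence_in_P_q_sum_t_p=infty}--\eqref{gawcga_convergence_in_P_q_eta'=o(t_p)} together with \eqref{gawcga_necessity_in_P_q_eta_boundedness} collapses to the single scalar condition $\liminf_{n\to\infty}(t'_{n+1}+\delta_n+\delta'_n+\eta_n)=0$. Set $c = \liminf_{n\to\infty} t_n > 0$, so there is $N$ with $t_n \ge c/2$ for all $n \ge N$; in particular $t_n^p \ge (c/2)^p =: c' > 0$ eventually.

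First I would prove the ``if'' direction. Assume $\liminf_{n\to\infty}(t'_{n+1}+\delta_n+\delta'_n+\eta_n)=0$. Then one can extract an increasing sequence $\{n_k\}$, all terms at least $N$, along which $t'_{n_k+1}+\delta_{n_k}+\delta'_{n_k}+\eta_{n_k}\to 0$, hence each of $t'_{n_k+1}$, $\delta_{n_k}$, $\delta'_{n_k}$, $\eta_{n_k}$ tends to $0$ individually. Since $t_{n_k+1}\ge c/2$ and $t_{n_k+1}^p \ge c'$, a quantity tending to $0$ is automatically $o(t_{n_k+1})$ and $o(t_{n_k+1}^p)$; this gives \eqref{gawcga_convergence_in_P_q_t'=o(t)}--\eqref{gawcga_convergence_in_P_q_eta=o(t_p)}. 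For \eqref{gawcga_convergence_in_P_q_eta'=o(t_p)} note that the corollary assumes $\lim_{n\to\infty}\eta'_n=0$, so $\eta'_{n_k}\to 0 = o(t_{n_k+1}^p)$; and \eqref{gawcga_necessity_in_P_q_eta_boundedness} holds because the error sequence is assumed bounded and $\eta'_n\to 0$. Finally $\sum_k t_{n_k+1}^p \ge \sum_k c' = \infty$, giving \eqref{gawcga_convergence_in_P_q_sum_t_p=infty}. Thus all hypotheses of Theorem~\ref{theorem_gawcga_convergence_in_P_q} are met and the gAWCGA converges.

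For the ``only if'' direction, suppose the gAWCGA converges for all $X\in\mathcal P_q$, all dictionaries, and all $f$. By Theorem~\ref{theorem_gawcga_convergence_in_P_q} there is a subsequence $\{n_k\}$ satisfying \eqref{gawcga_convergence_in_P_q_t'=o(t)}--\eqref{gawcga_convergence_in_P_q_eta'=o(t_p)}. Since $t_n$ is bounded above by $1$ (weakness sequence), $t'_{n_k+1}=o(t_{n_k+1})$ forces $t'_{n_k+1}\to 0$; likewise $\delta_{n_k},\delta'_{n_k},\eta_{n_k}=o(t_{n_k+1}^p)$ with $t_{n_k+1}^p\le 1$ force each of these to tend to $0$. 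Adding the four null sequences, $t'_{n_k+1}+\delta_{n_k}+\delta'_{n_k}+\eta_{n_k}\to 0$, which is exactly $\liminf_{n\to\infty}(t'_{n+1}+\delta_n+\delta'_n+\eta_n)=0$.

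I do not anticipate a serious obstacle here: the corollary is essentially a bookkeeping consequence of the main theorem once the observation $t_n^p$ bounded away from $0$ is in place, which trivializes every $o(\cdot)$ condition into a plain ``tends to $0$'' statement. The only point requiring a little care is to keep the roles of $\eta'_n$ straight --- its global convergence to $0$ is a standing hypothesis of the corollary, matching the standing requirement $\lim_{n\to\infty}\eta'_n=0$ in \eqref{gawcga_necessity_in_P_q_eta_boundedness}, so it never needs to appear in the scalar $\liminf$ condition.
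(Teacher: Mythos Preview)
Your proposal is correct and matches the paper's intended derivation: the corollary is stated immediately after Theorem~\ref{theorem_gawcga_convergence_in_P_q} without a separate proof, and your argument is precisely the bookkeeping reduction the paper has in mind --- once $t_n$ is bounded away from zero, the $o(t_{n_k+1}^p)$ conditions collapse to ``tends to $0$'', and conversely $t_n\le 1$ turns the $o(\cdot)$ conclusions back into null sequences.
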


The last two corollaries state that the conditions for the convergence 
of the gAWCGA are the same as for the WCGA
if inaccuracy sequences are from $\ell_1$ space.
\begin{corollary}\label{corollary_gawcga_l_1}
Let $\{t'_n\}_{n=1}^\infty \in \ell_1$, $\{\delta_n\}_{n=0}^\infty \in \ell_1$, 
$\{\delta'_n\}_{n=0}^\infty \in \ell_1$, $\{\eta_n\}_{n=1}^\infty \in \ell_1$, 
and $\{\eta'_n\}_{n=1}^\infty \in \ell_1$.
Then the gAWCGA with a weakness sequence $\{(t_n, t'_n)\}_{n=1}^\infty$, 
a perturbation sequence $\{(\delta_n, \delta'_n)\}_{n=0}^{\infty}$,
and a bounded error sequence $\{(\eta_n, \eta'_n)\}_{n=1}^{\infty}$
converges for any uniformly smooth Banach space $X$, 
any dictionary $\D$, and any element $f \in X$ if 
for any $0 < \theta \leq 1/2$
\[
\sum_{n=1}^\infty t_n \xi_n = \infty.
\]
\end{corollary}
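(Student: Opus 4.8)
The goal is to derive Corollary~\ref{corollary_gawcga_l_1} from Theorem~\ref{gawcga_convergence}. The plan is to verify the hypotheses of Theorem~\ref{gawcga_convergence} under the $\ell_1$ assumptions. The boundedness of $\{\eta_n\}$ is assumed, and since $\{\eta'_n\}_{n=1}^\infty \in \ell_1$ we have in particular $\lim_{n\to\infty}\eta'_n = 0$, so condition~\eqref{gawcga_eta_boundedness} holds. It remains to produce a subsequence $\{n_k\}_{k=1}^\infty$ along which~\eqref{gawcga_sum_t_xi=infty}--\eqref{gawcga_eta'=o(t_xi)} hold for every $0 < \theta \leq 1/2$.

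The key observation is that a sequence in $\ell_1$ must, along some subsequence, decay faster than any prescribed sequence whose partial sums diverge. More precisely, I would first fix $\theta = 1/2$ and use the divergence of $\sum_n t_n \xi_n$ (which holds for this $\theta$, hence the series $\sum_n t_n\xi_n$ diverges, and one checks that divergence for one value of $\theta$ forces divergence for all $0<\theta\le 1/2$ since $\xi_n$ is monotone in $\theta$). The standard fact I would invoke: if $\{a_n\} \in \ell_1$ with $a_n \geq 0$ and $\{b_n\}$ has $b_n > 0$, $\sum b_n = \infty$, then $\liminf_{n\to\infty} a_n/b_n = 0$. Applying this to the nonnegative sequence $a_n = t'_{n+1}/t_{n+1} + (\delta_n + \delta'_n + \eta_n + \eta'_n)/(t_{n+1}\xi_{n+1})$ is tempting, but $a_n$ need not be summable because dividing by $t_{n+1}\xi_{n+1}$ can destroy summability. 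Instead I would argue coordinate-by-coordinate combined with a diagonal extraction: for each of the five sequences $t'_{n+1}$, $\delta_n$, $\delta'_n$, $\eta_n$, $\eta'_n$, it lies in $\ell_1$, and I claim one can choose a single subsequence $\{n_k\}$ along which all five ratios tend to $0$.

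The cleanest route: since $\sum_{n} t_{n+1}\xi_{n+1} = \infty$ while $\sum_n \delta_n < \infty$, the ratio $\delta_n/(t_{n+1}\xi_{n+1})$ has liminf zero; similarly for $\delta'_n$, $\eta_n$, $\eta'_n$ (all in $\ell_1$), and since $\sum_n t_{n+1} = \infty$ (as $t_n \ge t_n\xi_n$ is false in general, but $t_{n+1}\xi_{n+1}\le t_{n+1}$ gives $\sum t_{n+1}=\infty$) while $\sum_n t'_{n+1} < \infty$, the ratio $t'_{n+1}/t_{n+1}$ has liminf zero. To get a common subsequence, I would apply the following lemma, which I would state and prove inline: if $\{c^{(i)}_n\}_{n}$, $i = 1,\dots,m$, are nonnegative sequences each with $\liminf_{n} c^{(i)}_n = 0$ and $\{b_n\}$ is positive with $\sum_n b_n = \infty$ where $\sum_n b_n \cdot \max_i c^{(i)}_n$ — no; simpler to observe that $\sum_n b_n(c^{(1)}_n+\dots+c^{(m)}_n)$ may diverge, so instead use: pick $n_k$ inductively so that $c^{(i)}_{n_k} \le 1/k$ for all $i$ simultaneously and $\sum_{k} b_{n_k} = \infty$. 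The last requirement is the delicate point and is handled by a greedy/blocking argument: partition $\mathbb{N}$ into consecutive blocks $B_1, B_2, \dots$ on each of which $\sum_{n\in B_j} b_n \geq 1$ (possible since $\sum b_n = \infty$ and $b_n$ need not be bounded below, so blocks are finite); within block $B_j$ we cannot guarantee a small $c^{(i)}_n$, so instead enlarge: for the subsequence to carry divergent mass of $b$, one instead notes $\limsup$ of $\sum_{n\le N, c_n \text{ small}} b_n$ must be infinite — this is where care is needed.

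The main obstacle is precisely this extraction of a single subsequence that simultaneously makes all five inaccuracy ratios $o(1)$ and keeps $\sum_k t_{n_k+1}\xi_{n_k+1} = \infty$. I would resolve it via the contrapositive-style estimate: let $S = \{n : t'_{n+1}/t_{n+1} \le \epsilon \text{ and } (\delta_n+\delta'_n+\eta_n+\eta'_n)/(t_{n+1}\xi_{n+1}) \le \epsilon\}$; if $\sum_{n\in S} t_{n+1}\xi_{n+1} < \infty$ for some $\epsilon > 0$, then on the complement $\mathbb{N}\setminus S$ one has $t_{n+1}\xi_{n+1} \le \epsilon^{-1}(\delta_n+\delta'_n+\eta_n+\eta'_n)$ or $\xi_{n+1}\le \epsilon^{-1} t'_{n+1}\xi_{n+1}/t_{n+1}$; using $\xi_{n+1}\le 1$ and boundedness arguments plus summability of the four $\ell_1$ sequences and of $t'_{n+1}$, one shows $\sum_{n\in\mathbb{N}\setminus S} t_{n+1}\xi_{n+1} < \infty$ as well — here one uses that on $\{t'_{n+1}/t_{n+1} > \epsilon\}$, $t_{n+1}\xi_{n+1} < \epsilon^{-1} t'_{n+1}$, summable. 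Hence $\sum_n t_{n+1}\xi_{n+1} < \infty$, contradicting~\eqref{gawcga_sum_t_xi=infty} for $\theta=1/2$. Therefore for every $\epsilon>0$ the set $S=S_\epsilon$ carries infinite $t_{n+1}\xi_{n+1}$-mass; a diagonalization over $\epsilon = 1/k$ then yields the desired subsequence $\{n_k\}$, and the same $\{n_k\}$ works for every $0<\theta\le 1/2$ because smaller $\theta$ only makes $\xi_n$ larger hence the divergence easier while the ratio conditions with $\xi_{n_k+1}$ in the denominator become easier too. Applying Theorem~\ref{gawcga_convergence} to this subsequence completes the proof, and I would remark that Corollary~\ref{corollary_gawcga_liminf_t_n>0} follows as the special case where $\{t_n\}$ is bounded below, so that $\xi_n$ is bounded below and the conditions collapse to $\liminf(t'_{n+1}+\delta_n+\delta'_n+\eta_n) = 0$.
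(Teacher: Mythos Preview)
Your core extraction argument---the $S_\epsilon$ sets and the observation that on the complement $t_{n+1}\xi_{n+1}$ is dominated by an $\ell_1$ sequence---is correct and is precisely a rederivation of Lemma~\ref{lemma_sequences_l_1}, which is what the paper invokes. So the approach matches the paper's. (One small slip: you write ``$\xi_{n+1}\le 1$'', which is not true in general; what holds is that $\xi_n$ is bounded by the unique $\xi^*$ solving $\rho(\xi^*)/\xi^* = \theta$, since $\rho(u)/u$ is nondecreasing. This is all you need for the estimate $t_{n+1}\xi_{n+1}\le M t_{n+1} < M\epsilon^{-1}t'_{n+1}$.)

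There is, however, a genuine gap in your treatment of the $\theta$-dependence. You assert that ``smaller $\theta$ only makes $\xi_n$ larger'', but the monotonicity runs the other way: since $\rho(u)/u$ is nondecreasing and $\rho(\xi_n)/\xi_n = \theta t_n$, decreasing $\theta$ \emph{decreases} $\xi_n$. Hence for $\theta < 1/2$ both the divergence condition $\sum_k t_{n_k+1}\xi_{n_k+1}(\theta) = \infty$ and the ratio conditions $\delta_{n_k} = o(t_{n_k+1}\xi_{n_k+1}(\theta))$ become \emph{harder}, not easier, and the subsequence you built for $\theta = 1/2$ need not satisfy them. Your argument therefore does not produce a single subsequence satisfying~\eqref{gawcga_sum_t_xi=infty}--\eqref{gawcga_eta'=o(t_xi)} for all $\theta$ simultaneously.

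The resolution is that one does not need such uniformity. Inspecting the proof of Theorem~\ref{gawcga_convergence}, the value of $\theta$ is fixed \emph{first} (as $\theta = \alpha^2/(24AC_f)$, determined by $f$, the dictionary, and the assumed failure of convergence), and only then is the subsequence used. So it suffices that for each fixed $\theta\in(0,1/2]$ there exist a subsequence along which the conditions hold; for that, apply Lemma~\ref{lemma_sequences_l_1} (equivalently, your $S_\epsilon$ argument) with $b_n = t_{n+1}\xi_{n+1}(\theta)$ and $a_n = t'_{n+1} + \delta_n + \delta'_n + \eta_n + \eta'_n$ for that particular $\theta$. This is the paper's route: Theorem~\ref{gawcga_convergence} together with a per-$\theta$ application of Lemma~\ref{lemma_sequences_l_1}.
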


\begin{corollary}\label{corollary_gawcga_l_1_P_q}
Let $\{t'_n\}_{n=1}^\infty \in \ell_1$, $\{\delta_n\}_{n=0}^\infty \in \ell_1$, 
$\{\delta'_n\}_{n=0}^\infty \in \ell_1$, $\{\eta_n\}_{n=1}^\infty \in \ell_1$, 
and $\{\eta'_n\}_{n=1}^\infty \in \ell_1$.
Then the gAWCGA with a weakness sequence $\{(t_n, t'_n)\}_{n=1}^\infty$, 
a perturbation sequence $\{(\delta_n, \delta'_n)\}_{n=0}^{\infty}$,
and a bounded error sequence $\{(\eta_n, \eta'_n)\}_{n=1}^{\infty}$
converges for any uniformly smooth Banach space $X \in \mathcal{P}_q$, 
any dictionary $\D$, and any element $f \in X$ if and only if
\[
\sum_{n=1}^\infty t^p_n = \infty.
\]
\end{corollary}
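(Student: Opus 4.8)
The plan is to deduce this corollary directly from Theorem~\ref{theorem_gawcga_convergence_in_P_q}, treating the two implications separately. For the "only if" direction: suppose the gAWCGA converges for all $X \in \mathcal{P}_q$, all dictionaries, and all elements. By Theorem~\ref{theorem_gawcga_convergence_in_P_q} there exists a subsequence $\{n_k\}$ along which \eqref{gawcga_convergence_in_P_q_sum_t_p=infty} holds, so in particular $\sum_{n=1}^\infty t_n^p \geq \sum_{k=1}^\infty t_{n_k+1}^p = \infty$, which is exactly what we want. (In fact the boundedness of $\eta_n$ and $\eta'_n \to 0$ are automatic here from the $\ell_1$ hypotheses, so there is nothing else to check on this side.)

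The substance is the "if" direction. Assume $\sum_{n=1}^\infty t_n^p = \infty$; I must exhibit a subsequence $\{n_k\}$ satisfying all six conditions \eqref{gawcga_convergence_in_P_q_sum_t_p=infty}--\eqref{gawcga_convergence_in_P_q_eta'=o(t_p)}. First note that since $\{t'_n\},\{\delta_n\},\{\delta'_n\},\{\eta_n\},\{\eta'_n\}$ all lie in $\ell_1$, their terms tend to $0$; in particular $\eta_0 = \sup_n \eta_n < \infty$ (also given as "bounded error sequence") and $\eta'_n \to 0$, so \eqref{gawcga_necessity_in_P_q_eta_boundedness} is met. The key observation is a standard comparison fact: if a nonnegative sequence $\{a_n\}$ is summable while $\sum t_n^p = \infty$, then one can select a subsequence along which $a_n$ is $o(t_{n_k+1}^p)$. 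Concretely, I would argue that the set $N = \{n : a_{n} \le t_{n+1}^p\}$ (or a suitably rescaled version) must satisfy $\sum_{n \in N} t_{n+1}^p = \infty$: otherwise $\sum_{n \notin N} t_{n+1}^p = \infty$, but on the complement $t_{n+1}^p < a_n$, forcing $\sum a_n = \infty$, a contradiction. Iterating this with $a_n = \varepsilon_j^{-1}\big(t'_n/t_n^{\,?} \cdots\big)$-type majorants and a diagonal argument over $\varepsilon_j \to 0$ produces a single subsequence $\{n_k\}$ along which each of the five $o(\cdot)$ relations holds and $\sum_k t_{n_k+1}^p = \infty$. The condition $t'_{n_k+1} = o(t_{n_k+1})$ is obtained the same way using that $\{t'_n\} \in \ell_1$ implies $t'_n \to 0$, combined with $t_{n_k+1}$ not decaying too fast along the chosen indices (guaranteed because $\sum t_{n_k+1}^p$ diverges, so the $t_{n_k+1}$ cannot all be tiny).

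The main obstacle is organizing the selection so that \emph{one and the same} subsequence works simultaneously for all six requirements, and handling the index shift ($n_k$ versus $n_k+1$) correctly — the summability hypotheses are on $\{\eta'_n\}_{n\ge1}$ etc. while the divergence is needed for $t_{n_k+1}$. I expect this to be a clean diagonalization: enumerate the five summable majorants together with a null-sequence $\varepsilon_j \downarrow 0$, and at stage $j$ restrict to indices $n$ (beyond those already chosen) with all five majorants below $\varepsilon_j t_{n+1}^p$ and $t'_{n+1} < \varepsilon_j t_{n+1}$; the comparison lemma above guarantees infinitely many such $n$ with $\sum t_{n+1}^p$ still divergent on this set, so we may pick one new index $n_j$. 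The resulting $\{n_k\}$ then feeds into Theorem~\ref{theorem_gawcga_convergence_in_P_q} to give convergence. I would also remark that Corollary~\ref{corollary_gawcga_l_1} follows by the identical argument in the general uniformly smooth setting, replacing $t_n^p$ with $t_n\xi_n$ throughout and using Theorem~\ref{gawcga_convergence} in place of Theorem~\ref{theorem_gawcga_convergence_in_P_q}.
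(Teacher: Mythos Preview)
Your proposal is correct and follows essentially the same route as the paper: both deduce the corollary from Theorem~\ref{theorem_gawcga_convergence_in_P_q} together with the comparison principle that a summable nonnegative sequence is $o$ of a divergent one along a subsequence on which divergence persists. The paper simply invokes this as Lemma~\ref{lemma_sequences_l_1} (applied with $b_n = t_{n+1}^p$ and $a_n$ the sum of the five $\ell_1$ sequences, shifted appropriately), whereas you re-derive it via the set $\{n : a_n \le t_{n+1}^p\}$ and a diagonalization over $\varepsilon_j \downarrow 0$; one minor simplification is that the condition $t'_{n_k+1} = o(t_{n_k+1})$ follows automatically once you have $t'_{n_k+1} = o(t_{n_k+1}^p)$, since $t_{n+1} \le 1$ gives $t_{n+1}^p \le t_{n+1}$, so there is no need for the separate ``$t_{n_k+1}$ not decaying too fast'' argument.
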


We note that in the last corollary the sequence $\{t'_n\}_{n=1}^\infty$ 
might be from $\ell_p$ space as well, however we consider it to be from 
$\ell_1$ for the simplicity of the formulation.
Corollaries~\ref{corollary_gawcga_l_1} and~\ref{corollary_gawcga_l_1_P_q} are 
obtained using Theorems~\ref{gawcga_convergence} and~\ref{theorem_gawcga_convergence_in_P_q}, 
and the following simple fact (see Lemma~2 from~\cite{dereventsov_awcgausbs}).
\begin{referencelemma}\label{lemma_sequences_l_1}
Let $\{a_n\}_{n=1}^\infty$ and $\{b_n\}_{n=1}^\infty$ be any 
such non-negative sequences that
\[
\sum_{n=1}^\infty a_n < \infty
\quad\text{and}\quad
\sum_{n=1}^\infty b_n = \infty.
\]
Then there exists a subsequence $\{n_k\}_{k=1}^\infty$ such that
\[
\sum_{k=1}^\infty b_{n_k} = \infty
\quad\text{and}\quad
a_{n_k} = o(b_{n_k}).
\]
\end{referencelemma}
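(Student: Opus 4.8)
The plan is to show that discarding the indices where $a_n$ is large relative to $b_n$ costs only a finite amount of $b$-mass, and then to extract the desired subsequence by a diagonal argument in which the admissible ratio is driven to zero block by block.

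First I would establish the key estimate. Set $A = \sum_{n=1}^\infty a_n < \infty$. For any threshold $\epsilon > 0$ consider the ``bad'' set $B_\epsilon = \{n : a_n > \epsilon b_n\}$. On this set $b_n < a_n/\epsilon$, so
\[
\sum_{n \in B_\epsilon} b_n \le \frac{1}{\epsilon}\sum_{n \in B_\epsilon} a_n \le \frac{A}{\epsilon} < \infty.
\]
Since $\sum_{n=1}^\infty b_n = \infty$, the complementary ``good'' set $G_\epsilon = \{n : a_n \le \epsilon b_n\}$ must satisfy $\sum_{n \in G_\epsilon} b_n = \infty$. In particular $G_\epsilon$ contains infinitely many indices with $b_n > 0$, and this remains true after deleting any finite initial segment of indices.

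Next I would build the subsequence greedily in blocks, using the thresholds $\epsilon_j = 1/j$. Suppose a finite collection of indices, all at most some $N$, has already been chosen. Applying the previous paragraph with $\epsilon = \epsilon_j$, the tail $\{n \in G_{1/j} : n > N\}$ still has infinite $b$-sum, so I may select finitely many indices $n > N$ lying in $G_{1/j}$, each with $b_n > 0$, whose $b$-values sum to at least $1$. These indices constitute the $j$-th block; I then increment $j$ and update $N$ to exceed every index chosen so far. Concatenating the blocks in order produces a strictly increasing sequence $\{n_k\}_{k=1}^\infty$.

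Finally I would verify the two conclusions. Each block contributes at least $1$ to the total, so $\sum_{k=1}^\infty b_{n_k} = \infty$. For the little-$o$ relation, every index in the $j$-th block lies in $G_{1/j}$ and hence satisfies $a_{n_k} \le b_{n_k}/j$; thus for all $k$ at or beyond the start of the $j$-th block we have $a_{n_k}/b_{n_k} \le 1/j$, which forces $a_{n_k}/b_{n_k} \to 0$, i.e. $a_{n_k} = o(b_{n_k})$. The only point requiring care — and the crux of the argument — is the compatibility of the two demands: shrinking the ratio threshold could in principle exhaust the available $b$-mass, but the finite-bad-set estimate of the first step guarantees that each good set $G_{1/j}$ retains infinite $b$-mass, so a block of $b$-weight at least $1$ can always be formed no matter how small the threshold has become.
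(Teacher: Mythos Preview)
Your argument is correct. The paper itself does not prove this lemma; it merely quotes it as a known ``simple fact'' (Lemma~2 from~\cite{dereventsov_awcgausbs}), so there is no in-paper proof to compare against. The block-by-block extraction you give---bounding the $b$-mass of the bad set $B_\epsilon$ by $A/\epsilon$ and then greedily harvesting unit-weight blocks from the good sets $G_{1/j}$---is a standard and complete argument for this fact.
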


\section{Proofs of Theorems~\ref{gawcga_convergence} 
and~\ref{theorem_gawcga_convergence_in_P_q}}
\label{proofs}

We will need several technical results.
The following is Lemma~2.2 from~\cite{temlyakov_gabs}.
\begin{referencelemma}\label{sup(D)=supA1(D)}
For any bounded linear functional $F$ and any
dictionary $\D$
\[
\sup_{g\in\D} |F(g)| = \sup_{g\in A_1(\D)} |F(g)|.
\]
\end{referencelemma}

We use the following lemmas from~\cite{temlyakov_gtabsa} rewritten for the gAWCGA.
\begin{lemma}\label{F_n(phi_n)}
Let $X$ be a Banach space with the modulus of smoothness $\rho(u)$. 
Then for any $\phi \in \Phi_n$
\[
|F_n(\phi)| \leq \beta_n(\phi) 
:= \inf_{\lambda>0} \frac{1}{\lambda}
\br{\delta_n + \eta_n 
+ \frac{\delta'_n + \eta'_n}{\n{f_n}} 
+ 2\rho(\lambda\n{\phi})}.
\]
\end{lemma}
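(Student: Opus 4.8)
The plan is to bound $|F_n(\phi)|$ for $\phi \in \Phi_n$ by perturbing the approximant $G_n$ along $\phi$ and exploiting the near-optimality of $G_n$ together with the near-norming property of $F_n$. Fix $\phi \in \Phi_n$; since $\phi \in \Phi_n$, for any scalar $\lambda$ we have $G_n \pm \lambda\n{\phi}\,\phi/\n{\phi} \in \Phi_n$ (assuming $\phi \neq 0$; the case $\phi = 0$ is trivial), so by~\eqref{gawcga_approximation} and the definition of $E_n$,
\[
\n{f_n \mp \lambda \phi} = \n{f - \br{G_n \pm \lambda \phi}} \geq E_n \geq \frac{\n{f - G_n} - \eta'_n}{1 + \eta_n} = \frac{\n{f_n} - \eta'_n}{1 + \eta_n}.
\]

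Next I would combine the two sign choices using the modulus of smoothness. Writing $x = f_n/\n{f_n}$ and $y = \phi/\n{\phi}$, the definition~\eqref{modulusofsmoothness} gives
\[
\n{f_n + \lambda\n{\phi} y} + \n{f_n - \lambda\n{\phi} y} \leq 2\n{f_n}\br{1 + \rho\br{\lambda\n{\phi}/\n{f_n}}},
\]
so together with the lower bound above, one of the two terms $\n{f_n \pm \lambda\n{\phi} y}$ is at most $2\n{f_n}(1 + \rho(\lambda\n{\phi}/\n{f_n})) - \n{f_n}(\n{f_n} - \eta'_n)/((1+\eta_n)\n{f_n})$ — i.e. after simplification at most $\n{f_n}(1 + 2\rho(\cdot)) - (\n{f_n} - \eta'_n)/(1+\eta_n)$. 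Applying the functional $F_n$ (which has norm $\leq 1$) to $f_n \mp \lambda\n{\phi} y$ and using~\eqref{gawcga_functional} in the form $F_n(f_n) \geq (1 - \delta_n)\n{f_n} - \delta'_n$ yields
\[
\n{f_n} \br{1 - \delta_n} - \delta'_n \mp \lambda\n{\phi}\, F_n(y) \leq F_n(f_n \mp \lambda\n{\phi} y) \leq \n{f_n \mp \lambda\n{\phi} y},
\]
and choosing the sign so that $\mp F_n(y) = |F_n(y)|$, hence $\mp\lambda\n{\phi}F_n(y)$ contributes $+\lambda\n{\phi}|F_n(\phi)|/\n{\phi} = \lambda|F_n(\phi)|$, we get
\[
\lambda |F_n(\phi)| \leq \n{f_n}\br{2\rho\br{\lambda\n{\phi}/\n{f_n}} + \delta_n} + \delta'_n - \frac{\n{f_n} - \eta'_n}{1 + \eta_n} + \n{f_n}.
\]
Using $1 - (1)/(1+\eta_n) = \eta_n/(1+\eta_n) \leq \eta_n$ and $\eta'_n/(1+\eta_n) \leq \eta'_n$ to estimate the remaining terms by $\n{f_n}\eta_n + \eta'_n$, then dividing by $\lambda$ and replacing $\lambda$ by $\lambda\n{f_n}$ (a harmless reparametrization) converts the $\rho$-argument into $\lambda\n{\phi}$ and produces exactly the claimed bound
\[
|F_n(\phi)| \leq \inf_{\lambda > 0} \frac{1}{\lambda}\br{\delta_n + \eta_n + \frac{\delta'_n + \eta'_n}{\n{f_n}} + 2\rho(\lambda\n{\phi})}.
\]

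The main obstacle, though routine, is the bookkeeping in keeping the relative error $\eta_n$ and absolute error $\eta'_n$ separated while passing from $E_n$ to $\n{f_n}$: one must be careful that $\n{f_n}/(1+\eta_n) \geq \n{f_n}(1 - \eta_n)$ and that the leftover $\eta'_n$ term gets divided by $\n{f_n}$ rather than left bare, which is why the bound features $(\delta'_n + \eta'_n)/\n{f_n}$. A secondary point to handle cleanly is the degenerate case $\n{f_n} = 0$, in which $f = G_n \in \Phi_n$ and the algorithm has already converged, so the inequality is vacuous; I would dispose of it in one line at the start. Everything else is the standard smoothness-modulus manipulation already used for the AWCGA in~\cite{temlyakov_gtabsa}, now carried through with the extra absolute-error parameters.
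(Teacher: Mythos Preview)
Your argument is correct and follows essentially the same route as the paper's: perturb $G_n$ within $\Phi_n$ to get $\n{f_n \mp \lambda\phi} \geq E_n \geq (1-\eta_n)\n{f_n} - \eta'_n$, combine with the smoothness inequality and the near-norming property of $F_n$, then reparametrize $\lambda \mapsto \lambda\n{f_n}$. The only slip is the phrase ``one of the two terms'': since the lower bound $\n{f_n \mp \lambda\phi} \geq E_n$ holds for \emph{both} signs, the resulting upper bound via the modulus-of-smoothness sum holds for both as well, which is precisely what you need when you later pick the sign making $\mp F_n(\phi) = |F_n(\phi)|$ (the paper sidesteps this by assuming $F_n(\phi) \geq 0$ at the outset).
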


\begin{proof}
Take any $\phi$ from $\Phi_n$. By the definition of the modulus of 
smoothness~\eqref{modulusofsmoothness} for any $\lambda > 0$
\[
\n{f_n - \lambda\phi} + \n{f_n + \lambda\phi} 
\leq 2\n{f_n} \br{1 + \rho\br{\frac{\lambda\n{\phi}}{\n{f_n}}}}.
\]
Assume that $F_n(\phi) \geq 0$ (case $F_n(\phi) < 0$ is handled similarly). 
Then, using~\eqref{gawcga_functional}, we obtain
\[
\n{f_n + \lambda\phi} \geq F_n\br{f_n + \lambda\phi}
\geq (1-\delta_n)\n{f_n} - \delta'_n + \lambda F_n(\phi),
\]
thus
\[
\n{f_n - \lambda\phi} \leq 
\n{f_n}\br{1 + \delta_n + 2\rho\br{\frac{\lambda\n{\phi}}{\n{f_n}}}} 
+ \delta'_n - \lambda F_n(\phi).
\]
On the other hand, by~\eqref{gawcga_approximation}
\[
\n{f_n - \lambda\phi} \geq E_n 
\geq (1+\eta_n)^{-1} \br{\n{f_n} - \eta'_n}
\geq (1-\eta_n) \n{f_n} - \eta'_n.
\]
Therefore
\[
\lambda F_n(\phi) 
\leq \n{f_n}\br{\delta_n + \eta_n + 2\rho\br{\frac{\lambda\n{\phi}}{\n{f_n}}}}
+ \delta'_n + \eta'_n
\]
and, since the inequality holds for any $\lambda > 0$,
\[
F_n(\phi) 
\leq \inf_{\lambda>0} \frac{1}{\lambda}
\br{\delta_n + \eta_n + \frac{\delta'_n + \eta'_n}{\n{f_n}} + 2\rho(\lambda\n{\phi})}
= \beta_n(\phi).
\]
\end{proof}

\begin{lemma}\label{F_n(phi_n+1)}
Let $X$ be a Banach space with the modulus of
smoothness $\rho(u)$. 
Take a number $\epsilon \geq 0$ and two elements 
$f$ and $h$ from $X$ such that $\n{f-h} \leq \epsilon$
and $h/A \in A_1(\D)$ with some number $A = A(\epsilon) > 0$.
Then
\begin{equation*}
|F_n(\phi_{n+1})|
\geq t_{n+1} A^{-1} \br{(1 - \delta_n) \n{f_n} - \delta'_n 
- \beta_n(G_n) - \epsilon} - t'_{n+1}.
\end{equation*}
\end{lemma}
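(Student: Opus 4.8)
The goal is to lower-bound $|F_n(\phi_{n+1})|$, where $\phi_{n+1}\in\D$ is chosen per step (2) of the gAWCGA to satisfy $F_n(\phi_{n+1})\ge t_{n+1}\sup_{g\in\D}F_n(g)-t'_{n+1}$. The natural strategy is therefore: first convert the greedy-selection inequality into a statement about $\sup_{g\in\D}|F_n(g)|$, then use Lemma~\ref{sup(D)=supA1(D)} to pass from the dictionary $\D$ to its convex hull $A_1(\D)$, then evaluate $F_n$ on the specific point $h/A\in A_1(\D)$ supplied by the hypothesis, and finally use the approximation $\n{f-h}\le\epsilon$ together with the norming inequality~\eqref{gawcga_functional} for $F_n$ at $f_n$ to bring in the term $(1-\delta_n)\n{f_n}-\delta'_n$, correcting for the linear piece $G_n\in\Phi_n$ via Lemma~\ref{F_n(phi_n)}.

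First I would write, using step (2) and $t'_{n+1}\ge0$ together with the symmetry of $\D$ (so that $\sup_{g\in\D}F_n(g)=\sup_{g\in\D}|F_n(g)|$ whenever the sup is nonnegative, which it is since $-g\in\D$),
\[
|F_n(\phi_{n+1})|\ \ge\ F_n(\phi_{n+1})\ \ge\ t_{n+1}\sup_{g\in\D}|F_n(g)|-t'_{n+1}.
\]
Then Lemma~\ref{sup(D)=supA1(D)} gives $\sup_{g\in\D}|F_n(g)|=\sup_{g\in A_1(\D)}|F_n(g)|\ \ge\ |F_n(h/A)|=A^{-1}|F_n(h)|$, since $h/A\in A_1(\D)$ by hypothesis.

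Next I would estimate $F_n(h)$ from below. Write $h=f-(f-h)$ and $f=f_n+G_n$ (here $f_n=f-G_n$ is the $n$-th remainder and $G_n\in\Phi_n$), so that
\[
F_n(h)=F_n(f_n)+F_n(G_n)-F_n(f-h)\ \ge\ \big((1-\delta_n)\n{f_n}-\delta'_n\big)-|F_n(G_n)|-\n{F_n}\,\n{f-h},
\]
using~\eqref{gawcga_functional} for the first term, $\n{f-h}\le\epsilon$ and $\n{F_n}\le1$ for the last. Since $G_n\in\Phi_n$, Lemma~\ref{F_n(phi_n)} yields $|F_n(G_n)|\le\beta_n(G_n)$, so $F_n(h)\ge(1-\delta_n)\n{f_n}-\delta'_n-\beta_n(G_n)-\epsilon$. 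Combining the three displays,
\[
|F_n(\phi_{n+1})|\ \ge\ t_{n+1}A^{-1}\big((1-\delta_n)\n{f_n}-\delta'_n-\beta_n(G_n)-\epsilon\big)-t'_{n+1},
\]
which is exactly the claimed bound.

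**Main obstacle.** None of the steps is deep; the only points requiring care are bookkeeping ones. First, one must make sure the right-hand quantity $(1-\delta_n)\n{f_n}-\delta'_n-\beta_n(G_n)-\epsilon$ is handled correctly even when it is negative — the inequality $|F_n(\phi_{n+1})|\ge F_n(\phi_{n+1})$ and the monotonicity of multiplication by $t_{n+1}A^{-1}\ge0$ keep everything valid, but it is worth noting explicitly so the bound is not vacuous-looking. Second, the passage $\sup_{g\in\D}F_n(g)=\sup_{g\in\D}|F_n(g)|$ relies on the standing assumption that $\D$ is symmetric, which should be invoked. Beyond that, the proof is a direct chain of inequalities and the only genuinely substantive input, Lemma~\ref{F_n(phi_n)}, is already available.
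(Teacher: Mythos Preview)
Your proof is correct and follows essentially the same route as the paper's: use the greedy selection rule and symmetry of $\D$ to get $|F_n(\phi_{n+1})|\ge t_{n+1}\sup_{g\in A_1(\D)}|F_n(g)|-t'_{n+1}$ via Lemma~\ref{sup(D)=supA1(D)}, evaluate at $h/A$, then expand $F_n(h)$ through $f=f_n+G_n$ and $\n{f-h}\le\epsilon$, applying~\eqref{gawcga_functional} and Lemma~\ref{F_n(phi_n)}. The only cosmetic difference is that the paper carries absolute values through the chain $|F_n(h)|\ge|F_n(f)|-\epsilon\ge|F_n(f_n)|-|F_n(G_n)|-\epsilon$ via the reverse triangle inequality, whereas you bound $F_n(h)$ directly and use $|F_n(h)|\ge F_n(h)$ at the end; both are equivalent.
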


\begin{proof}
Condition~\eqref{gawcga_approximant} and Lemma~\ref{sup(D)=supA1(D)} provide
\[
|F_n(\phi_{n+1})| 
\geq t_{n+1} \sup_{g\in\D} |F_n(g)| - t'_{n+1}  
= t_{n+1} \sup_{g\in A_1(\D)} |F_n(g)| - t'_{n+1}.
\]
Taking $g = h/A \in A_1(\D)$ we obtain
\begin{align*}
\sup_{g\in A_1(\D)} |F_n(g)|
&\geq A^{-1} |F_n(h)|
\geq A^{-1} \br{|F_n(f)| - \epsilon}
\\
&\geq A^{-1} \br{|F_n(f_n)| - |F_n(G_n)| - \epsilon}.
\end{align*}
Hence condition~\eqref{gawcga_functional} and Lemma~\ref{F_n(phi_n)} provide
\[
|F_n(\phi_{n+1})|
\geq t_{n+1} A^{-1} \br{(1 - \delta_n) \n{f_n} - \delta'_n 
- \beta_n(G_n) - \epsilon} - t'_{n+1}.
\]
\end{proof}

\begin{lemma}\label{E_m<E_n}
Let $X$ be a Banach space with the modulus of smoothness $\rho(u)$. 
Take a number $\epsilon \geq 0$ and two elements 
$f$ and $h$ from $X$ such that $\n{f-h} \leq \epsilon$
and $h/A \in A_1(\D)$ with some number $A = A(\epsilon) > 0$.
Then for any $m > n$
\begin{multline*}
E_m \leq \inf_{\mu\geq0} \n{f_n} \Bigg[
1 + \delta_n + \frac{\delta'_n}{\n{f_n}} + 2\rho\br{\frac{\mu}{\n{f_n}}}
\\
- \frac{\mu t_{n+1}}{A\n{f_n}} \Big((1 - \delta_n) \n{f_n} 
- \delta'_n - \beta_n(G_n) - \epsilon\Big) \Bigg] + \mu t'_{n+1}.
\end{multline*}
\end{lemma}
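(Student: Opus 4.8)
\emph{Proof proposal.} The plan is to test the $m$-th error $E_m$ against a one-parameter family of elements of $\Phi_m$ built from the previous approximant $G_n$ and the next chosen dictionary element $\phi_{n+1}$. Since $m > n$ we have $n+1 \le m$, so $G_n \in \Phi_n \subseteq \Phi_m$ and $\phi_{n+1} \in \Phi_m$; hence $G_n + \lambda\phi_{n+1} \in \Phi_m$ for every scalar $\lambda$, and therefore
\[
E_m \le \n{f - G_n - \lambda\phi_{n+1}} = \n{f_n - \lambda\phi_{n+1}}.
\]
It then remains only to bound $\n{f_n - \lambda\phi_{n+1}}$ for a well-chosen $\lambda$; we may assume $f_n \neq 0$, since otherwise $E_m = 0$ and there is nothing to prove.

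Next I would fix $\mu \ge 0$ and take $\lambda = \mu$ if $F_n(\phi_{n+1}) \ge 0$ and $\lambda = -\mu$ otherwise, so that $\lambda F_n(\phi_{n+1}) = \mu\,|F_n(\phi_{n+1})|$. Using the definition of the modulus of smoothness~\eqref{modulusofsmoothness} exactly as in the proof of Lemma~\ref{F_n(phi_n)}, and recalling $\n{\phi_{n+1}} = 1$,
\[
\n{f_n - \lambda\phi_{n+1}} + \n{f_n + \lambda\phi_{n+1}} \le 2\n{f_n}\br{1 + \rho\br{\frac{\mu}{\n{f_n}}}}.
\]
For the second term on the left I would apply the near-norming property~\eqref{gawcga_functional} of $F_n$ to get
\[
\n{f_n + \lambda\phi_{n+1}} \ge F_n(f_n + \lambda\phi_{n+1}) \ge (1-\delta_n)\n{f_n} - \delta'_n + \mu\,|F_n(\phi_{n+1})|.
\]
Subtracting this from the previous display and collecting the $\n{f_n}$-terms (using $2\n{f_n} - (1-\delta_n)\n{f_n} = \n{f_n}(1+\delta_n)$) gives
\[
\n{f_n - \lambda\phi_{n+1}} \le \n{f_n}\br{1 + \delta_n + 2\rho\br{\frac{\mu}{\n{f_n}}}} + \delta'_n - \mu\,|F_n(\phi_{n+1})|.
\]

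Finally I would insert the lower bound on $|F_n(\phi_{n+1})|$ furnished by Lemma~\ref{F_n(phi_n+1)} --- which is precisely where the hypotheses $\n{f-h}\le\epsilon$ and $h/A\in A_1(\D)$ enter. Since $\mu \ge 0$, this yields
\[
-\mu\,|F_n(\phi_{n+1})| \le -\frac{\mu t_{n+1}}{A}\br{(1-\delta_n)\n{f_n} - \delta'_n - \beta_n(G_n) - \epsilon} + \mu t'_{n+1},
\]
valid regardless of the sign of the bracketed quantity. Combining with the last display, factoring $\n{f_n}$ out (so that $\delta'_n = \n{f_n}\cdot \delta'_n/\n{f_n}$), and taking the infimum over all $\mu \ge 0$ gives the asserted inequality. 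The argument is a direct computation; the only points requiring mild care are the sign choice for $\lambda$ and the fact that multiplying the possibly negative bound of Lemma~\ref{F_n(phi_n+1)} by $-\mu \le 0$ preserves the inequality direction — so I do not anticipate any genuine obstacle.
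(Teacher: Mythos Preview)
Your argument is correct and coincides with the paper's proof: both use the modulus-of-smoothness inequality for $f_n \pm \mu\phi_{n+1}$, the near-norming bound~\eqref{gawcga_functional} for $F_n$, and Lemma~\ref{F_n(phi_n+1)} to control $|F_n(\phi_{n+1})|$, then pass to the infimum over $\mu$. The only cosmetic differences are that the paper handles the sign of $F_n(\phi_{n+1})$ by a case split rather than your choice $\lambda=\pm\mu$, and it routes the final step through $E_m\le E_{n+1}\le\n{f_n-\mu\phi_{n+1}}$ rather than arguing directly that $G_n+\lambda\phi_{n+1}\in\Phi_m$.
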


\begin{proof}
By the definition of the modulus of 
smoothness~\eqref{modulusofsmoothness} for any $\mu \geq 0$
\[
\n{f_n - \mu\phi_{n+1}} + \n{f_n + \mu\phi_{n+1}} 
\leq 2\n{f_n} \br{1 + \rho\br{\frac{\mu}{\n{f_n}}}}.
\]
Assume that $F_n(\phi_{n+1}) \geq 0$ 
(case $F_n(\phi_{n+1}) < 0$ is handled similarly). 
Then, using~\eqref{gawcga_functional} and Lemma~\ref{F_n(phi_n+1)}, we get
\begin{align*}
\n{f_n + \mu\phi_{n+1}} 
&\geq F_n\br{f_n + \mu\phi_{n+1}}
\geq (1 - \delta_n)\n{f_n} - \delta'_n + \mu |F_n(\phi_{n+1})|
\\
&\geq \begin{aligned}[t]
(1 &- \delta_n)\n{f_n} - \delta'_n 
\\
&+ \mu t_{n+1} A^{-1} \br{(1 - \delta_n) \n{f_n} - \delta'_n 
- \beta_n(G_n) - \epsilon} - \mu t'_{n+1}.
\end{aligned}
\end{align*}
Thus
\begin{multline*}
\n{f_n - \mu\phi_{n+1}} \leq \n{f_n} 
\br{1 + \delta_n + \frac{\delta'_n}{\n{f_n}} + 2\rho\br{\frac{\mu}{\n{f_n}}}}
\\
- \mu t_{n+1} A^{-1} \Big((1 - \delta_n) \n{f_n} 
- \delta'_n - \beta_n(G_n) - \epsilon\Big) + \mu t'_{n+1}.
\end{multline*}
On the other hand, since 
$E_m \leq E_{n+1} \leq \n{f_n - \mu\phi_{n+1}}$ 
for any $\mu \geq 0$,
\begin{multline*}
E_m \leq \n{f_n} \Bigg[
1 + \delta_n + \frac{\delta'_n}{\n{f_n}} + 2\rho\br{\frac{\mu}{\n{f_n}}}
\\
- \frac{\mu t_{n+1}}{A\n{f_n}} \Big((1 - \delta_n) \n{f_n} 
- \delta'_n - \beta_n(G_n) - \epsilon\Big)\Bigg] + \mu t'_{n+1}.
\end{multline*}
Taking an infimum over all $\mu \geq 0$ completes the proof.
\end{proof}

We are now ready to prove Theorem~\ref{gawcga_convergence}.

\begin{proof}[Proof of Theorem~\ref{gawcga_convergence}]
Assume that for some element $f \in X$ the gAWCGA does not converge.
Note that then the monotone sequence $\{E_n\}_{n=1}^\infty$ does not converge 
to $0$ since otherwise conditions~\eqref{gawcga_eta_boundedness} would imply 
\[
\lim_{n\to\infty} \n{f_n} 
\leq \lim_{n\to\infty} \br{(1 + \eta_0) E_n + \eta'_n} = 0.
\]
Thus there exists a number $\alpha > 0$ such that for any $n \geq 1$
\begin{equation}\label{gawcga_assumption_E_n>alpha}
\n{f_n} \geq E_n \geq \alpha.
\end{equation}
Denote $C_f = (2 + \eta_0)\n{f} + \eta'_0 < \infty$, 
where $\eta_0 = \sup_{n \geq 1} \eta_n$ and $\eta'_0 = \sup_{n \geq 1} \eta'_n$.
Then inequality~\eqref{gawcga_approximation} gives for any $n \geq 1$
\begin{equation}
\label{gawcga_convergence_f_n_G_n_estimate}
\begin{aligned}
&\n{f_n} \leq (1 + \eta_0)\n{f} + \eta'_0 \leq C_f,
\\
&\n{G_n} \leq \n{f_n} + \n{f} \leq C_f.
\end{aligned}
\end{equation}
Let $\{n_k\}_{k=1}^\infty$ be a subsequence for which 
conditions of the theorem hold. Then 
\begin{align*}
\beta_{n_k}(G_{n_k}) 
&= \inf_{\lambda>0} \frac{1}{\lambda}
\br{\delta_{n_k} + \eta_{n_k} + \frac{\delta'_{n_k} + \eta'_{n_k}}{\n{f_{n_k}}} 
+ 2\rho(\lambda\n{G_{n_k}})}
\\
&\leq \inf_{\lambda>0} \frac{1}{\lambda}
\br{\delta_{n_k} + \eta_{n_k} + \frac{\delta'_{n_k} + \eta'_{n_k}}{\alpha} 
+ 2\rho\br{\lambda C_f}}
\end{align*}
and, due to conditions~\eqref{gawcga_t'=o(t)}--\eqref{gawcga_eta'=o(t_xi)} 
and the inequality $0 \leq \theta t_n \xi_n \leq 1$, 
there exists a number $K \geq 1$ such that for any $k \geq K$ the 
following estimates hold with $\theta =\frac{\alpha^2}{24 A C_f}$:
\begin{align}
\label{gawcga_convergence_K1}
& \br{\frac{1}{2} - \delta_{n_k}} \alpha - \delta'_{n_k} - \beta_{n_k}(G_{n_k}) 
\geq \frac{\alpha}{4},
\\
\label{gawcga_convergence_K2}
& \delta_{n_k} + \frac{\delta'_{n_k}}{\alpha} \leq \theta \xi_{n_k+1} t_{n_k+1},
\\
\label{gawcga_convergence_K3}
& (1 + \eta_{n_k})(1 - 3\theta \xi_{n_k+1} t_{n_k+1}) 
\leq 1 - 2\theta \xi_{n_k+1} t_{n_k+1},
\\
\label{gawcga_convergence_K4}
& \eta'_{n_k} + \alpha \xi_{n_k+1} t'_{n_k+1} \leq \alpha\theta \xi_{n_k+1} t_{n_k+1}.
\end{align}
Take $\epsilon = \alpha/2$ and find an element 
$h \in X$ such that $\n{f-h} \leq \epsilon$ 
and ${h/A \in A_1(\D)}$ for some $A > 0$.
Then Lemma~\ref{E_m<E_n}, assumption~\eqref{gawcga_assumption_E_n>alpha}, 
and estimates~\eqref{gawcga_convergence_f_n_G_n_estimate} 
and~\eqref{gawcga_convergence_K1} provide for any $k \geq K$
\begin{align*}
E_{n_{k+1}} 
&\leq \inf_{\mu\geq0} \n{f_{n_k}} \Bigg[
1 + \delta_{n_k} + \frac{\delta'_{n_k}}{\alpha} + 2\rho\br{\frac{\mu}{\alpha}}
\\
&\qquad - \frac{\mu t_{{n_k}+1}}{A C_f} 
\br{\br{\frac{1}{2} - \delta_{n_k}} \alpha 
- \delta'_{n_k} - \beta_{n_k}(G_{n_k})} \Bigg] + \mu t'_{{n_k}+1}
\\
&\leq \inf_{\mu\geq0} \n{f_{n_k}} \Bigg[
1 + \delta_{n_k} + \frac{\delta'_{n_k}}{\alpha} + 2\rho\br{\frac{\mu}{\alpha}}
- \frac{\alpha \mu t_{{n_k}+1}}{4A C_f} \Bigg] + \mu t'_{{n_k}+1}.
\end{align*}
By taking $\mu = \alpha \xi_{n_k+1}$, and using 
estimates~\eqref{gawcga_convergence_K2}--\eqref{gawcga_convergence_K4}, 
and condition~\eqref{gawcga_approximation} we obtain
\begin{align}
\nonumber
E_{n_{k+1}} &\leq 
\n{f_{n_k}} \br{1 + \delta_{n_k} + \frac{\delta'_{n_k}}{\alpha} 
- 4\theta \xi_{n_k+1} t_{n_k+1}} + \alpha \xi_{n_k+1} t'_{n_k+1}
\\
\nonumber
&\leq \n{f_{n_k}} \br{1 - 3\theta \xi_{n_k+1} t_{{n_k}+1}} 
+ \alpha \xi_{n_k+1} t'_{n_k+1}
\\
\nonumber
&\leq E_{n_k} \br{1 - 2\theta \xi_{n_k+1} t_{n_k+1}} + \eta'_{n_k} 
+ \alpha \xi_{n_k+1}t'_{n_k+1}
\\
\label{gawcga_convergence_recursive_estimate_E_n}
&\leq E_{n_k} \br{1 - \theta \xi_{n_k+1} t_{n_k+1}}.
\end{align}
Note that condition~\eqref{gawcga_sum_t_xi=infty} implies that the infinite product 
$\prod_{k=1}^{\infty} \br{1 - \theta \xi_{n_k+1} t_{n_k+1}}$ diverges to $0$.
Then, recursively applying estimate~\eqref{gawcga_convergence_recursive_estimate_E_n},
we obtain for sufficiently big $N \geq K$
\begin{align*}
E_{n_{N+1}} 
&\leq E_{n_{K}} \prod_{k=K}^{N} \br{1 - \theta \xi_{n_k+1} t_{n_k+1}} 
\\
&\leq \n{f} \prod_{k=K}^{N} \br{1 - \theta \xi_{n_k+1} t_{n_k+1}} 
\\
&< \alpha,
\end{align*}
which contradicts assumption~\eqref{gawcga_assumption_E_n>alpha}. 
Therefore
\[
\lim_{n\to\infty} E_n = 0,
\]
i.e. the gAWCGA of $f$ converges to $f$.
\end{proof}

We will use the following simple lemma to prove 
Theorem~\ref{theorem_gawcga_convergence_in_P_q}.
\begin{lemma}\label{(a+b^q)^1/q<=a+b}
Let $q > 1$, $a \geq 0$ and $b \geq 1$.
Then
\[
\br{a + b^q}^{1/q} \leq a + b.
\]
\end{lemma}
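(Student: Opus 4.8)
The final statement to prove is Lemma~\ref{(a+b^q)^1/q<=a+b}: for $q>1$, $a\geq 0$, $b\geq 1$, we have $(a+b^q)^{1/q}\leq a+b$.

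\textbf{Proof proposal.}
The plan is to reduce the inequality, by raising both sides to the $q$-th power (legitimate since both sides are nonnegative), to the equivalent statement $a + b^q \leq (a+b)^q$. So the goal becomes showing $(a+b)^q - b^q \geq a$ whenever $a\geq 0$ and $b\geq 1$. I would prove this by fixing $b\geq 1$ and viewing $\psi(a) = (a+b)^q - b^q - a$ as a function of $a\in[0,\infty)$. At $a=0$ we have $\psi(0)=0$, so it suffices to show $\psi$ is non-decreasing on $[0,\infty)$, i.e. $\psi'(a) = q(a+b)^{q-1} - 1 \geq 0$. Since $a\geq 0$ and $b\geq 1$, we have $a+b\geq 1$, and since $q>1$ the exponent $q-1>0$, so $(a+b)^{q-1}\geq 1$; combined with $q>1$ this gives $\psi'(a)\geq q - 1 > 0$. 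Hence $\psi(a)\geq \psi(0)=0$ for all $a\geq 0$, which is exactly $a+b^q\leq(a+b)^q$, and taking $q$-th roots yields the claim.

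An alternative, calculus-free route that I might prefer for cleanliness is the following: by the superadditivity-type inequality for the power function with exponent $q\geq 1$ on nonnegative reals, one has $(x+y)^q \geq x^q + y^q$ for $x,y\geq 0$; applying this with $x=a^{1/q}\cdot$(something) does not immediately work because the exponents don't line up, so instead I would use the bound $(a+b)^q \geq a\cdot q(a+b)^{q-1}/\ldots$ — on reflection this is circular, so I would stick with either the monotonicity argument above or simply invoke convexity: the function $t\mapsto t^q$ is convex, so its graph lies above any tangent line; the tangent at $t=b$ is $t^q \approx b^q + qb^{q-1}(t-b)$, giving $(a+b)^q \geq b^q + qb^{q-1}a \geq b^q + a$ since $qb^{q-1}\geq 1$ (as $q>1$, $b\geq 1$). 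This is the shortest version and the one I would write.

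There is essentially no obstacle here — the only thing to be careful about is justifying the reduction to the $q$-th power form (monotonicity of $t\mapsto t^{1/q}$ on $[0,\infty)$) and making sure the hypothesis $b\geq 1$ is used exactly where needed, namely to guarantee $qb^{q-1}\geq 1$ (equivalently $(a+b)^{q-1}\geq 1$). The case $q=2$ is the familiar $a+b^2\leq(a+b)^2 = a^2+2ab+b^2$, which holds since $a^2 + 2ab \geq a$ when $b\geq 1$ (indeed $2ab\geq a$ already), and the general case is the natural analogue; I would mention this only informally if at all, and present the convexity/tangent-line argument as the formal proof.
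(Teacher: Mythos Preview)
Your proposal is correct, and the convexity/tangent-line argument you settle on as ``the one I would write'' is exactly the paper's proof: the paper states $(1+x)^q \geq 1+qx$ by convexity, substitutes $x=a/b$, and multiplies through by $b^q$ to get $(a+b)^q \geq b^q + qab^{q-1} \geq a+b^q$, which is your inequality $(a+b)^q \geq b^q + qb^{q-1}a \geq b^q + a$ in a trivially different dress. Your alternative monotonicity argument via $\psi'(a)=q(a+b)^{q-1}-1\geq q-1>0$ is also fine but is not what the paper does.
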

\begin{proof}
Due to the convexity of $(1 + x)^q$ we have for any $x \geq 0$
\[
(1 + x)^q \geq 1 + qx.
\]
Then by taking $x = a/b$ we get
\[
(a + b)^q = b^q (1 + x)^q 
\geq b^q (1 + qx)
= b^q + aqb^{q-1}
\geq a + b^q.
\]
\end{proof}

\begin{proof}[Proof of Theorem~\ref{theorem_gawcga_convergence_in_P_q}]
We start with the proof of the sufficiency. 
Assume that 
conditions~\eqref{gawcga_convergence_in_P_q_sum_t_p=infty}--\eqref{gawcga_convergence_in_P_q_eta'=o(t_p)}
hold for some subsequence $\{n_k\}_{k=1}^\infty$.
Choose any number $0 < \theta \leq 1/2$ and find the corresponding 
sequence $\{\xi_n\}_{n=1}^\infty$. Then using the definition 
$\rho(\xi_n) = \theta t_n \xi_n$ and the estimate 
$\rho(u) \leq \gamma u^q$, we derive
\[
\xi_n \geq \br{\frac{\theta}{\gamma} t_n}^{p-1}.
\]
Thus for any $n \geq 1$
\[
t_n^p \leq \br{\frac{\gamma}{\theta}}^{p-1} t_n \xi_n,
\]
and conditions~\eqref{gawcga_convergence_in_P_q_sum_t_p=infty}--\eqref{gawcga_convergence_in_P_q_eta'=o(t_p)}
imply that conditions~\eqref{gawcga_sum_t_xi=infty}--\eqref{gawcga_eta'=o(t_xi)} 
hold for the subsequence $\{n_k\}_{k=1}^\infty$ and any $0 < \theta \leq 1/2$. 
Therefore Theorem~\ref{gawcga_convergence}
guarantees convergence of the gAWCGA for any dictionary $\D$ 
and any element $f \in X$.

\smallskip
We prove the necessity of the stated conditions by giving a counterexample.
Namely, we assume that at least one of 
conditions~\eqref{gawcga_necessity_in_P_q_eta_boundedness}--\eqref{gawcga_convergence_in_P_q_eta'=o(t_p)}
fails, and give an example of such a Banach space $X \in \mathcal{P}_q$,
a dictionary $\D$, and an element $f \in \D$ that the gAWCGA of $f$ diverges.

Let $X = \ell_q \in \mathcal{P}_q$ and $\D = \{\pm e_n\}_{n=0}^\infty$, 
where $\{e_n\}_{n=0}^\infty$ is the canonical basis in $\ell_q$.

Assume that condition~\eqref{gawcga_necessity_in_P_q_eta_boundedness} fails, 
i.e. that there exist a subsequence $\{n_k\}_{k=1}^\infty$ and 
a number $\alpha > 0$ such that for any $k \geq 1$
\[
\eta_{n_k} \geq \alpha k
\ \ \text{or}\ \ 
\eta'_{n_k} \geq \alpha.
\]
Then take a positive non-increasing sequence $\{a_j\}_{j=1}^\infty \in \ell_q$ 
such that 
\[
a_1 \geq \alpha
\ \ \text{and}\ \ 
\br{\sum_{j=n_k+1}^\infty a_j^q}^{1/q} \geq k^{-1}
\]
for any $k \geq 1$.
Denote $f = \sum_{j=1}^\infty a_j e_j \in \ell_q$ and
consider the following realization of the gAWCGA of $f$:
\\
\indent For $n \not\in \{n_k\}_{k=1}^\infty$ choose $F_{n-1}$ to be the 
norming functional for $f_{n-1}$, $\phi_n = e_n$,  
and $G_n = \sum_{j=1}^n a_j e_j$.
\\
\indent For $n \in \{n_k\}_{k=1}^\infty$ choose $F_{n-1}$ to be the 
norming functional for $f_{n-1}$, $\phi_n = e_n$
and $G_n = \alpha e_1 + \sum_{j=1}^n a_j e_j$, 
which is possible since
\[
\n{f_{n_k}}_q = \br{\alpha^q + \sum_{j=n_k+1}^\infty a_j^q}^{1/q}
\leq \alpha + E_{n_k},
\]
and either
\[
\n{f_{n_k}}_q \leq E_{n_k} + \eta'_{n_k}
\ \ \text{or}\ \ 
\n{f_{n_k}}_q \leq (1 + \alpha k) E_{n_k}
\leq (1 + \eta_{n_k}) E_{n_k}.
\]
Then for any $k \geq 1$ norm of the remainder 
$\n{f_{n_k}}_q \geq \alpha$, hence $\n{f_n}_q \not\to 0$
and the gAWCGA of $f$ diverges.

Assume now that 
conditions~\eqref{gawcga_convergence_in_P_q_sum_t_p=infty}--\eqref{gawcga_convergence_in_P_q_eta'=o(t_p)} 
do not hold, i.e. for any subsequence $\{n_k\}_{k=1}^\infty$ at least one 
of the following statements fails:
\begin{align*}
&\sum_{k=1}^\infty t_{n_k+1}^p = \infty,
\\
&t'_{n_k+1} = o(t_{n_k+1})
\\
&\delta_{n_k} = o(t_{n_k+1}^p),
\\
&\delta'_{n_k} = o(t_{n_k+1}^p),
\\
&\eta_{n_k} = o(t_{n_k+1}^p),
\\
&\eta'_{n_k} = o(t_{n_k+1}^p).
\end{align*}
For a number $\alpha > 0$ define sets
\[
\Lambda_1 = \{n > 1 : 
\delta_{n-1} + \delta'_{n-1} \geq \alpha t^p_n 
\text{ or } 
\eta_{n-1} + \eta'_{n-1} \geq \alpha t^p_n
\text{ or }
t'_n \geq \alpha^{1/p} t_n\}
\]
and $\Lambda_2 = \mathbb{N} \setminus \Lambda_1$.
We claim that there exists an $\alpha > 0$ such that
\begin{equation}
\label{gawcga_convergence_necessity_alpha_claim}
\sum_{j \in \Lambda_2} t^p_j < \infty.
\end{equation}
Indeed, if $\sum_{j \in \Lambda_2} t^p_j = \infty$ for any $\alpha > 0$ 
then for every $k \geq 1$ consider $\alpha(k) = 1/k$, 
and choose a sequence of disjoint finite sets $\{\Gamma_k\}_{k=1}^\infty$, 
where $\Gamma_k \subset \Lambda_2(k)$ is such that 
$\sum_{j \in \Gamma_k} t^p_j \geq 1$.
Hence by considering the union $\cup_{k=1}^{\infty} (\Gamma_k + \{-1\})$
(where $+$ denotes the Minkowski addition),
we receive the subsequence for which 
conditions~\eqref{gawcga_convergence_in_P_q_sum_t_p=infty}--\eqref{gawcga_convergence_in_P_q_eta'=o(t_p)}
hold, which contradicts the aforementioned assumption.
Fix an $\alpha > 0$ for which claim~\eqref{gawcga_convergence_necessity_alpha_claim}
holds, and find corresponding sets $\Lambda_1$ and $\Lambda_2$.

If $|\Lambda_1| < \infty$ then $\sum_{j=1}^\infty t^p_j < \infty$.
Take $f = e_0 + \sum_{j=1}^\infty t^{p/q}_j e_j$ and 
consider the following realization of the gAWCGA of $f$:
\\
\indent For each $n \geq 1$ choose $F_{n-1}$ to be the 
norming functional for $f_{n-1}$, $\phi_n = e_n$, 
and $G_n = \sum_{j=1}^n t^{p/q}_j e_j$.
Then for any $n \geq 1$ norm of the remainder 
$\n{f_n}_q \geq 1$, hence the gAWCGA of $f$ diverges.

Consider the case $|\Lambda_1| = \infty$.
Take any such non-negative sequence $\{a_j\}_{j\in\Lambda_1}$
that $a_j \leq 1$ for any $j \geq 1$, 
$\sum_{j\in\Lambda_1} a_j^q \geq 1/\alpha$
and $\sum_{j\in\Lambda_1} a_j^p < \infty$.
Denote 
\[
f = \alpha^{1/q} \beta \br{\sum_{j \in \Lambda_1} a_j e_j 
+ \sum_{j \in \Lambda_2} t^{p/q}_j e_j},
\]
where 
\[
\beta = \br{\eta_0 + \eta'_0 + 
\alpha\br{\sum_{j \in \Lambda_1} a_j^q 
+ \sum_{j \in \Lambda_2} t^p_j}}^{-1/q}
\leq 1.
\]
We claim that for some realization of the gAWCGA of $f$ the 
indices from $\Lambda_1$ will not be chosen.
Namely, we show that there exists such a realization that
for any $n \geq 1$ the set $\Gamma_n$ of indices of $e_j$ chosen 
on the first $n$ steps of the algorithm and the $n$-th remainder 
$f_n$ satisfy the following relations:
\begin{equation}
\label{awcga_convergence_awcga_claim}
\begin{aligned}
&\Gamma_n \cap \Lambda_1 = \varnothing,
\\
&f_n = \beta(\eta_n + \eta'_n)^{1/q} e_1 
+ \alpha^{1/q} \beta \br{\sum_{j \in \Lambda_1} a_j e_j 
+ \sum_{j \in \Lambda_2^{(n)}} t^{p/q}_j e_j},
\end{aligned}
\end{equation}
where $\Lambda_2^{(n)} = \Lambda_2 \setminus \Gamma_n$.
Consider the following realization of the gAWCGA of $f$:
\\
\indent For $n = 1$ choose 
\[
F_0(x) = F_f(x) 
= \frac{\sum_{j \in \Lambda_1} a_j^{q/p} x_j 
+ \sum_{j \in \Lambda_2} t_j x_j}
{(\alpha\beta^q)^{-1/p} \n{f}_q^{q/p}}.
\]
Then, since $a_j \leq 1$, we get
\begin{align*}
F_0(e_0) &= 0,
\\
F_0(e_j) &\leq (\alpha\beta^q)^{1/p} \n{f}_q^{-q/p},
\text{ for any } j \in \Lambda_1,
\\
F_0(e_j) &= t_j (\alpha\beta^q)^{1/p} \n{f}_q^{-q/p}
\text{ for any } j \in \Lambda_2,
\end{align*}
and choosing $\phi_1 = e_1$ satisfies~\eqref{gawcga_approximant} 
since $1 \in \Lambda_2$.
Thus $\Gamma_1 = \{1\}$, and taking 
\[
f_1 = \beta(\eta_1 + \eta'_1)^{1/q} e_1 
+ \alpha^{1/q} \beta\br{\sum_{j \in \Lambda_1} a_j e_j
+ \sum_{j \in \Lambda_2^{(1)}} t^{p/q}_j e_j}
\]
satisfies~\eqref{gawcga_approximation} since the estimate
\[
\beta \leq E_1 = \alpha^{1/q} \beta \br{\sum_{j \in \Lambda_1} a_j^q 
+ \sum_{j \in \Lambda_2^{(1)}} t^p_j}^{1/q} \leq 1
\] 
and Lemma~\ref{(a+b^q)^1/q<=a+b} provide
\begin{align*}
\n{f_1}_q &= \beta\br{\eta_1 + \eta'_1 
+ \alpha \br{\sum_{j \in \Lambda_1} a^q_j
+ \sum_{j \in \Lambda_2^{(1)}} t^p_j}}^{1/q}
\\
&\leq \beta(\eta_1 + \eta'_1) + E_1
\\
&\leq (1 + \eta_1) E_1 + \eta'_1.
\end{align*}
Hence for $n = 1$ claim~\eqref{awcga_convergence_awcga_claim} holds.
\\
\indent For $n \geq 1$, provided 
\[
f_n = \beta(\eta_n + \eta'_n)^{1/q} e_1 
+ \alpha^{1/q} \beta \br{\sum\limits_{j \in \Lambda_1} a_j e_j 
+ \sum_{j \in \Lambda_2^{(n)}} t^{p/q}_j e_j},
\]
taking 
\[
F_n(x) = 
\frac{(\delta_n + \delta'_n)^{1/p} x_0 
+ (\eta_n + \eta'_n)^{1/p} x_1 
+ \alpha^{1/p} \br{\sum\limits_{j \in \Lambda_1} a_j^{q/p} x_j 
+ \sum\limits_{j \in \Lambda_2^{(n)}} t_j x_j}}
{\br{\beta^{-q} (1 + \delta_n + \delta'_n) \n{f_n}_q^q}^{1/p}}
\]
satisfies~\eqref{gawcga_functional} since the estimate
\[
\beta \leq \n{f_n}_q 
= \beta\br{\eta_n + \eta'_n 
+ \alpha\br{\sum_{j\in\Lambda_1} a_j^q 
+ \sum_{j\in\Lambda_2^{(n)}} t_j^p}}^{1/q}
\leq 1
\]
and H\"older's inequality provide
\[
|F_n(x)| 
\leq \frac{\br{\delta_n + \delta'_n + \beta^{-q}\n{f_n}_q^q}^{1/p} 
\br{\sum\limits_{j=0}^\infty x_j^q}^{1/q}}
{\br{\beta^{-q} (1 + \delta_n + \delta'_n) \n{f_n}_q^q}^{1/p}}
\leq \n{x}_q
\]
and
\[
F_n(f_n) = \frac{\n{f_n}_q^q}
{(1 + \delta_n + \delta'_n)^{1/p} \n{f_n}_q^{q/p}}
\geq (1 - \delta_n) \n{f_n}_q - \delta'_n,
\]
where the last inequality holds since $\n{f_n}_q \leq 1$ and 
\begin{equation*}
\begin{aligned}
(1 + \delta_n &+ \delta'_n)^{1/p} 
((1 - \delta_n) \n{f_n}_q - \delta'_n)
\\
&\begin{aligned}
\leq (1 + \delta_n &+ \delta'_n)^{1/p} 
(1 - \delta_n - \delta'_n) \n{f_n}_q
\\
&\begin{aligned}
= (1 - (\delta_n &+ \delta'_n)^2)^{1/p} 
(1 - \delta_n - \delta'_n)^{1/q} \n{f_n}_q
\\
&\leq \n{f_n}_q.
\end{aligned}
\end{aligned}
\end{aligned}
\end{equation*}
Hence such choice of a functional is admissible.
Let $A_n = \br{\beta^{-q} (1 + \delta_n + \delta'_n) \n{f_n}_q^q}^{-1/p}$.
Then, since $a_j \leq 1$, we get
\begin{align*}
F_n(e_0) &= (\delta_n + \delta'_n)^{1/p} A_n,
\\
F_n(e_1) &= (\eta_n + \eta'_n)^{1/p} A_n,
\\
F_n(e_j) &\leq \alpha^{1/p} A_n
\text{ for any } j \in \Lambda_1,
\\
F_n(e_j) &= t_j \alpha^{1/p} A_n
\text{ for any } j \in \Lambda_2^{(n)},
\\
F_n(e_j) &= 0
\text{ for any } j \in \Gamma_n \setminus \{0, 1\}.
\end{align*}
If $n+1 \in \Lambda_2$ we choose $\phi_{n+1} = e_{n+1}$.
Otherwise $n+1 \in \Lambda_1$, and by definition of the set
at least one of the following inequalities holds:
\begin{align*}
&F_n(e_0) \geq t_{n+1} \alpha^{1/p} A_n
\geq t_{n+1} \alpha^{1/p} A_n - t'_{n+1},
\\
&F_n(e_1) \geq t_{n+1} \alpha^{1/p} A_n
\geq t_{n+1} \alpha^{1/p} A_n - t'_{n+1},
\\
&t_{n+1} \sup_{g \in \D} F_n(g) - t'_{n+1}
\leq t_{n+1} \alpha^{1/p} A_n - \alpha^{1/p} t_{n+1}
\leq 0.
\end{align*}
Then we choose $\phi_{n+1} = e_0$ or $\phi_{n+1} = e_1$.
In either case $\Gamma_{n+1} \cap \Lambda_1 = \varnothing$ and taking 
\[
f_{n+1} = \beta(\eta_{n+1} + \eta'_{n+1})^{1/q} e_1 
+ \alpha^{1/q} \beta\br{\sum\limits_{j \in \Lambda_1} a_j e_j 
+ \sum\limits_{j \in \Lambda_2^{(n+1)}} t^{p/q}_j e_j}
\]
satisfies~\eqref{gawcga_approximation} since the estimate
\[
\beta \leq E_{n+1} = \alpha^{1/q} \beta \br{\sum_{j \in \Lambda_1} a_j^q 
+ \sum_{j \in \Lambda_2^{(n+1)}} t^p_j}^{1/q} \leq 1
\] 
and Lemma~\ref{(a+b^q)^1/q<=a+b} provide
\begin{align*}
\n{f_{n+1}}_q &= \beta\br{\eta_{n+1} + \eta'_{n+1} 
+ \alpha \br{\sum_{j \in \Lambda_1} a^q_j
+ \sum_{j \in \Lambda_2^{(n+1)}} t^p_j}}^{1/q}
\\
&\leq \beta(\eta_{n+1} + \eta'_{n+1}) + E_{n+1}
\\
&\leq (1 + \eta_{n+1}) E_{n+1} + \eta'_{n+1}.
\end{align*}
Hence claim~\eqref{awcga_convergence_awcga_claim} holds for any $n \geq 1$.
Thus $\n{f_n} \geq \beta \not\to 0$ and the gAWCGA of $f$ diverges.
\end{proof}

\section{Non-sufficiency of smoothness of a space for the convergence of the WCGA}
\label{smoothness_non-sufficiency_wcga}

In this section we demonstrate that smoothness of a space is not a sufficient 
condition for the convergence of the WCGA.
Specifically, we will construct an example of a smooth Banach space $X$, 
a dictionary $\D$, and an element ${f \in X}$ such that the WCGA of $f$
with any weakness sequence $\{t_n\}_{n=1}^\infty$ diverges.
The space we construct was used in~\cite{donahue_darken_gurvits_sontag_rcanhs} and, 
in a special case, in~\cite{livshitz_cgabs}.

In order to obtain the desired space we re-norm $\ell_1$ --- the space of sequences, 
whose series are absolutely convergent.
Take a non-increasing sequence of numbers $\{p_n\}_{n=1}^\infty$ 
with $p_n > 1$ for any $n \geq 1$, and
\begin{equation}\label{p_n_condition_smoothness}
\sum_{n=1}^{\infty} \br{1 - \frac{1}{p_n}} < \infty.
\end{equation}
Let $\{e_n\}_{n=1}^\infty$ be the canonical basis in $\ell_1$.
Consider a sequence of non-linear functionals $\{\vartheta_n\}_{n=0}^\infty$ defined 
as follows: for any $x = \sum_{n=1}^\infty x_n e_n \in \ell_1$
\begin{align*}
\vartheta_0(x) &= 0,
\text{ and for any } n \geq 1
\\
\vartheta_n(x) &= \br{\vartheta_{n-1}^{p_n}(x) + |x_n|^{p_n}}^{1/p_n}.
\end{align*}
In particular,
\begin{align*}
\vartheta_1(x) &= |x_1|,
\\
\vartheta_2(x) &= \br{|x_1|^{p_2} + |x_2|^{p_2}}^{1/p_2},
\\
\vartheta_3(x) &= \br{\br{|x_1|^{p_2} + |x_2|^{p_2}}^{p_3/p_2} + |x_3|^{p_3}}^{1/p_3}.
\end{align*}
We claim that $\vartheta_n$ is a norm on $\ell_1^n$. Indeed, for any $x \in \ell_1^n$
\begin{gather*}
\vartheta_n(x) = 0
\text{ if and only if } x = 0, 
\\
\vartheta_n(\lambda x) = |\lambda| \vartheta_n(x)
\text{ for any } \lambda \in \mathbb{R}.
\end{gather*}
We prove triangle inequality for $\vartheta_n(\cdot)$ using induction by $n$.
The base case $n = 1$ is obvious. Then, using Minkowski's inequality, we
obtain for any $n > 1$ and any $x, y \in \ell_1^n$
\begin{align*}
\vartheta_n(x+y) &= \br{\vartheta_{n-1}^{p_n}(x+y) + |x_n + y_n|^{p_n}}^{1/p_n}
\\
&\leq \br{\br{\vartheta_{n-1}(x) + \vartheta_{n-1}(y)}^{p_n} + \br{|x_n| + |y_n|}^{p_n}}^{1/p_n}
\\
&\leq \br{\vartheta_{n-1}^{p_n}(x) + |x_n|^{p_n}}^{1/p_n} 
+ \br{\vartheta_{n-1}^{p_n}(y) + |y_n|^{p_n}}^{1/p_n}
\\
&= \vartheta_n(x) + \vartheta_n(y).
\end{align*}
Define the space $X$ as 
\begin{equation*}
X = \{x \in \ell_1 : \lim_{n\to\infty} \vartheta_n(x) < \infty\},
\end{equation*}
and the norm $\n{\cdot}_X$ on X as
\begin{equation*}
\n{x}_X = \lim_{n\to\infty} \vartheta_n(x).
\end{equation*}
Note that for any $x \in \ell_1$ the sequence $\{\vartheta_n(x)\}_{n=0}^\infty$ 
is non-decreasing, and, therefore, the limit always exists.
Moreover, for any $n \geq 1$
\begin{equation*}
\vartheta_n(x) \leq \vartheta_{n-1}(x) + |x_n| \leq \sum_{k=1}^n |x_k|,
\end{equation*}
and, by H\"older's inequality,
\begin{align*}
\sum_{k=1}^n |x_k| &\leq 2^{1-\frac{1}{p_2}}\vartheta_2(x) + \sum_{k=3}^n |x_k|
\leq 2^{1-\frac{1}{p_2}} \br{\vartheta_2(x) + \sum_{k=3}^n |x_k|}
\\
&\leq 2^{1-\frac{1}{p_2}}\br{2^{1-\frac{1}{p_3}}\vartheta_3(x) + \sum_{k=4}^n |x_k|}
\leq 2^{\sum_{k=2}^3 \br{1-\frac{1}{p_k}}} \br{\vartheta_3(x) + \sum_{k=4}^n |x_k|}
\\
&\cdots
\\
&\leq 2^{\sum_{k=2}^{n-1} \br{1-\frac{1}{p_k}}} \br{\vartheta_{n-1}(x) + |x_n|}
\leq 2^{\sum_{k=2}^n \br{1-\frac{1}{p_k}}} \vartheta_n(x).
\end{align*}
Therefore, by taking the limit by $n \to \infty$, we obtain for any $x \in X$
\begin{equation}\label{1_X_norm_equivalence}
\rho \n{x}_1 \leq \n{x}_X \leq \n{x}_1,
\end{equation}
where $\rho = 2^{-\sum_{k=1}^\infty \br{1-\frac{1}{p_k}}} > 0$ by  the choice of
$\{p_n\}_{n=1}^\infty$~\eqref{p_n_condition_smoothness}.
Hence, the $\n{\cdot}_X$-norm is equivalent to $\n{\cdot}_1$-norm, 
and $X = (\ell_1, \n{\cdot}_X)$ is a Banach space.
We note that while we impose condition~\eqref{p_n_condition_smoothness} to obtain
the norms equivalence, the weaker restrictions on the rate of decay of 
$\{p_n\}_{n=1}^\infty$ might be used (see Proposition~1 
from~\cite{dowling_johnson_lennard_turett_ojdt}).

Next, we show that the constructed space $X$ is smooth. Namely, we prove that 
for any element $x \in X$ there is a unique norming functional $F_x$.

First, we establish the dual of $X$.
Let $\{e^*_n\}_{n=1}^\infty$ be the canonical basis in $\ell_\infty$.
Consider the sequence of numbers $\{q_n\}_{n=1}^\infty$ given by 
\[
q_n = \frac{p_n}{p_n-1}.
\]
Similarly, we define the sequence of functionals $\{\nu_n\}_{n=0}^\infty$ as follows:
for any sequence ${a = \sum_{n=1}^\infty a_n e_n^* \in \ell_\infty}$
\begin{align*}
\nu_0(a) &= 0,
\text{ and for any } n \geq 1
\\
\nu_n(a) &= \br{\nu_{n-1}^{q_n}(a) + |a_n|^{q_n}}^{1/q_n}.
\end{align*}
Consider the space 
\[
X^* = \{a \in \ell_\infty : \lim_{n\to\infty} \nu_n(a) < \infty\},
\]
equipped with the norm 
\[
\n{a}_{X^*} = \lim_{n\to\infty} \nu_n(a).
\]
In the same way as above we show that $\n{\cdot}_{X^*}$-norm and
$\n{\cdot}_\infty$-norm are equivalent. For any $n \geq 1$
\[
\nu_n(a) \geq \sup_{k \leq n} |a_k|, 
\]
and
\begin{align*}
\nu_n(a) &= \br{\nu_{n-1}^{q_n}(a) + |a_n|^{q_n}}^{1/q_n}
\leq 2^{\frac{1}{q_n}} \max\{\nu_{n-1}(a), |a_n|\}
\\
&\leq 2^{\frac{1}{q_{n-1}} + \frac{1}{q_n}} \max\{\nu_{n-2}(a), |a_{n-1}|, |a_n|\}
\\
&\cdots
\\
&\leq 2^{\sum_{k=3}^n \frac{1}{q_k}} \max\{\nu_2(a), |a_3|, \ldots, |a_n|\}
\\
&\leq 2^{\sum_{k=2}^n \frac{1}{q_k}} \max\{|a_1|, |a_2|, \ldots, |a_n|\}.
\end{align*}
Therefore, by taking the limit by $n \to \infty$, we obtain for any $a \in X^*$
\[
\n{a}_\infty \leq \n{a}_{X^*} \leq \rho^{-1}\n{a}_\infty,
\]
i.e. the $\n{\cdot}_{X^*}$-norm is equivalent to $\n{\cdot}_\infty$-norm, 
and $X^* = (\ell_\infty, \n{\cdot}_{X^*})$ is a Banach space.

We claim that $X^*$ is the dual of $X$. Indeed, for any $x \in X$ and any $a \in X^*$ 
the H\"older's inequality provides for any $N \in \mathbb{N}$
\begin{align*}
\sum_{n=1}^N |a_n| |x_n|
&\leq \nu_2(a) \vartheta_2(x) + \sum_{n=3}^N |a_n| |x_n|
\\
&\cdots
\\
&\leq \nu_{N-1}(a) \vartheta_{N-1}(x) + |a_N| |x_N|
\leq \nu_N(a) \vartheta_N(x),
\end{align*}
and therefore
\[
|a(x)| = \lim_{N\to\infty} \left| \sum_{n=1}^N a_n x_n \right|
\leq \lim_{N\to\infty} \sum_{n=1}^N |a_n| |x_n|
\leq \n{a}_{X^*} \n{x}_X.
\]
Similarly, using induction we obtain for any functional 
$a(x) = \sum_{n=1}^\infty a_j x_j$ on $X$
\[
\sup_{x \in S_X} a(x) = \n{a}_{X^*},
\]
which completes the proof of the claim.

Consider the spaces $X^n = (\ell_1^n, \vartheta_n(\cdot))$
and ${X^*}^n = (\ell_\infty^n, \nu_n(\cdot))$ --- the initial segments of 
$X$ and $X^*$ respectively.
We use induction to show that for any $n \geq 1$ the space ${X^*}^n$ is strictly convex. 
Indeed, ${X^*}^1 = (\mathbb{R}, |\cdot|)$ is strictly convex, and for any $n > 1$
\[
{X^*}^n = {X^*}^{n-1} \oplus_{q_n} \mathbb{R}
\]
is strictly convex as a $q_n$-sum of strictly convex spaces with 
$1 < q_n < \infty$ (see, e.g.,~\cite{beauzamy_ibstg}).
Therefore $X^n$ is smooth as a predual of a strictly convex space ${X^*}^n$
(e.g.~\cite{beauzamy_ibstg}).
\\
Lastly, we will need the following simple lemma to prove smoothness of $X$.
\begin{lemma}\label{norming_functional_in_initial_segment}
Let $x = \sum_{n=1}^\infty x_n e_n$ be an element in $X$ and 
$F_x = \sum_{n=1}^\infty a_n e^*_n$ be a norming functional for $x$.
Then for any $m \in \mathbb{N}$
\[
F^m_x = \frac{\sum_{n=1}^m a_n e^*_n}{\nu_m(a)}
\]
is a norming functional for $x^m = \sum_{n=1}^m x_n e_n \in X^m$.
\end{lemma}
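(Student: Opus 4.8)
The plan is to verify directly that $F^m_x$ meets the two defining conditions of a norming functional for $x^m$ in $X^m$, working with the duality $(X^m)^* = {X^*}^m$ (with norm $\nu_m$) established above. Since $\nu_m$ depends only on the first $m$ coordinates, $\n{F^m_x}_{(X^m)^*} = \nu_m(a)/\nu_m(a) = 1$ automatically, so — once we know $\nu_m(a) \neq 0$ — everything reduces to the single identity
\[
\sum_{n=1}^m a_n x_n = \nu_m(a)\,\vartheta_m(x),
\]
which immediately gives $F^m_x(x^m) = \vartheta_m(x) = \n{x^m}_{X^m}$.

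I would obtain this identity from the telescoped H\"older estimate already used above to show that $X^*$ is the dual of $X$. Applying the two-term H\"older inequality with conjugate exponents $p_k$, $q_k$ to the recursions $\nu_k^{q_k} = \nu_{k-1}^{q_k} + |a_k|^{q_k}$ and $\vartheta_k^{p_k} = \vartheta_{k-1}^{p_k} + |x_k|^{p_k}$ gives $\nu_{k-1}(a)\vartheta_{k-1}(x) + |a_k||x_k| \le \nu_k(a)\vartheta_k(x)$, and iterating over $k$ yields, for all $0 \le i \le j$ (with $\nu_0 = \vartheta_0 = 0$),
\[
\nu_i(a)\,\vartheta_i(x) + \sum_{n=i+1}^{j} |a_n||x_n| \;\le\; \nu_j(a)\,\vartheta_j(x).
\]
Taking $i = 0$, $j \to \infty$ shows $\sum_n |a_n||x_n| \le \n{a}_{X^*}\n{x}_X = \n{x}_X < \infty$, so every series below converges absolutely; taking $i = 0$, $j = m$ gives $\sum_{n=1}^m a_n x_n \le \nu_m(a)\vartheta_m(x)$; and taking $i = m$, $j \to \infty$, together with $\nu_n(a) \to \n{a}_{X^*} = 1$ and $\vartheta_n(x) \to \n{x}_X$, gives $\nu_m(a)\vartheta_m(x) + \sum_{n>m} a_n x_n \le \n{x}_X$. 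Since $F_x$ is norming, $\n{x}_X = F_x(x) = \sum_{n=1}^m a_n x_n + \sum_{n>m} a_n x_n$; subtracting the last inequality from this identity gives $\sum_{n=1}^m a_n x_n \ge \nu_m(a)\vartheta_m(x)$, and combining the two bounds proves the identity.

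It remains to exclude $\nu_m(a) = 0$. If $x^m = 0$ there is nothing to prove, so assume $\vartheta_m(x) > 0$ and $\nu_m(a) = 0$, i.e. $a_1 = \dots = a_m = 0$. The identity just proved, written for $m+1$ in place of $m$, becomes $a_{m+1} x_{m+1} = |a_{m+1}|\,\vartheta_{m+1}(x)$; but $a_{m+1}x_{m+1} \le |a_{m+1}||x_{m+1}| \le |a_{m+1}|\vartheta_{m+1}(x)$, while $\vartheta_{m+1}(x) > |x_{m+1}|$ strictly (because $\vartheta_{m+1}^{p_{m+1}}(x) = \vartheta_m^{p_{m+1}}(x) + |x_{m+1}|^{p_{m+1}}$ with $\vartheta_m(x) > 0$), so $a_{m+1} = 0$; iterating forces $a_n = 0$ for all $n$, contradicting $\n{a}_{X^*} = \n{F_x} = 1$. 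The one step that needs a little care is casting the H\"older bound in the ``initial block plus tail'' form above, so that subtracting it from the norming identity isolates exactly $\sum_{n=1}^m a_n x_n$; the rest is a direct reuse of the estimates already established for the duality of $X$.
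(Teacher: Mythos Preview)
Your proof is correct and follows essentially the same route as the paper: both arguments hinge on the telescoped H\"older estimate $\nu_i(a)\vartheta_i(x) + \sum_{n=i+1}^{j}|a_n||x_n| \le \nu_j(a)\vartheta_j(x)$ together with the norming identity $F_x(x) = \n{x}_X$, the only cosmetic difference being that the paper phrases the conclusion as a proof by contradiction while you extract the two opposite inequalities directly. Your treatment is in fact slightly more careful than the paper's, since you explicitly dispose of the degenerate case $\nu_m(a) = 0$, which the paper leaves implicit.
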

\begin{proof}
Assume that ${F_x^m(x^m) < \n{x^m}_{X^m} = \vartheta_m(x)}$, i.e. 
$F_x^m$ is not a norming functional for $x^m$. Then 
\[
\sum_{n=1}^m a_n x_n < \nu_m(a) \vartheta_m(x)
\]
and for any $N > m$ by H\"older's inequality
\begin{align*}
F_x(x) &= \sum_{n=1}^\infty a_n x_n 
\leq \sum_{n=1}^\infty |a_n| |x_n|
\\
&< \nu_m(a) \vartheta_m(x) + \sum_{n=m+1}^\infty |a_n| |x_n|
\\
&\leq \nu_N(a) \vartheta_N(x) + \sum_{n=N+1}^\infty |a_n| |x_n|.
\end{align*}
Taking the limit as $N \to \infty$ we get
\[
F_x(x) < \n{a}_{X^*} \n{x}_X = \n{x}_X,
\]
which contradicts $F_x(x) = \n{x}$.
\end{proof}

\begin{lemma}\label{X_is_smooth}\footnote{This proof is due to S.J.\,Dilworth.}
The space $X = (\ell_1, \n{\cdot}_X)$ is smooth.
\end{lemma}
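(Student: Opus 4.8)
The plan is to show that every non-zero $x \in X$ has a unique norming functional, using the two structural facts already established: $X^* = (\ell_\infty, \n{\cdot}_{X^*})$ is the dual of $X$, and each initial segment ${X^*}^m = (\ell_\infty^m, \nu_m(\cdot))$ is strictly convex (equivalently, each $X^m$ is smooth). First I would fix $x = \sum_{n=1}^\infty x_n e_n \in X$ with $\n{x}_X = 1$ and suppose that $F = \sum a_n e^*_n$ and $\tilde F = \sum \tilde a_n e^*_n$ are both norming functionals for $x$. The goal is to deduce $a_n = \tilde a_n$ for all $n$.

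The key step is to pass to initial segments via Lemma~\ref{norming_functional_in_initial_segment}. For any $m$, that lemma gives that $F^m = (\sum_{n=1}^m a_n e^*_n)/\nu_m(a)$ and $\tilde F^m = (\sum_{n=1}^m \tilde a_n e^*_n)/\nu_m(\tilde a)$ are each norming functionals for $x^m = \sum_{n=1}^m x_n e_n$ in $X^m$. Now I distinguish two cases. If $x^m \neq 0$, then since $X^m$ is smooth its norming functional is unique, so $F^m = \tilde F^m$ as elements of ${X^*}^m$; reading off coordinates, $a_n/\nu_m(a) = \tilde a_n/\nu_m(\tilde a)$ for $1 \le n \le m$, i.e. the truncated sequences $(a_1,\dots,a_m)$ and $(\tilde a_1,\dots,\tilde a_m)$ are positive scalar multiples of one another. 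If instead $x^m = 0$, i.e. $x_1 = \dots = x_m = 0$, then I argue directly that $a_n = \tilde a_n = 0$ for $n \le m$: indeed, by the Hölder-type inequality used throughout (e.g.\ in Lemma~\ref{norming_functional_in_initial_segment}), $F(x) = \sum_{n>m} a_n x_n \le \nu_N(a')\vartheta_N(x)$ where $a'$ agrees with $a$ past index $m$ and is $0$ on $1,\dots,m$, and $\nu_N(a') \le \nu_N(a) = \n{a}_{X^*}$ with equality forcing $a_1=\dots=a_m=0$ by strict monotonicity of $t \mapsto (t^{q}+c)^{1/q}$; since $F(x)=\n{x}_X=\n{a}_{X^*}$, equality must hold, so $a$ vanishes on the first $m$ coordinates, and likewise $\tilde a$.

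Finally I would combine these across all $m$. Since $x \neq 0$, there is a smallest index $m_0$ with $x_{m_0} \neq 0$; the first-$m$ truncations of $x$ vanish for $m < m_0$ (handled by the second case, giving $a_n = \tilde a_n = 0$ for $n < m_0$) and are non-zero for $m \ge m_0$. For such $m$ the first case gives $(a_1,\dots,a_m) = c_m (\tilde a_1,\dots,\tilde a_m)$ for some $c_m > 0$; comparing the bound obtained at $m$ and at $m+1$ shows the scalars are consistent, so there is a single $c > 0$ with $a_n = c\,\tilde a_n$ for all $n$. But then $1 = \n{a}_{X^*} = c\,\n{\tilde a}_{X^*} = c$, hence $c = 1$ and $F = \tilde F$. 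This establishes uniqueness of the norming functional, so $X$ is smooth.

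I expect the main obstacle to be the bookkeeping in the degenerate case $x^m = 0$: one must be careful that Lemma~\ref{norming_functional_in_initial_segment} is only informative when $x^m \neq 0$ (a norming functional of the zero element is not unique), and that the vanishing of the leading coordinates of any norming functional of $x$ genuinely follows from the strict monotonicity of the $\nu_n$ recursion rather than merely from Hölder. Once that point is handled cleanly, the rest is the standard "unique norming functional on every finite segment, glued by consistency of scalars" argument.
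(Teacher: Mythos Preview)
Your approach is essentially the paper's: invoke Lemma~\ref{norming_functional_in_initial_segment} and the smoothness of each $X^m$ to force the truncations of two putative norming functionals to agree up to scalar, then use $\n{a}_{X^*}=\n{\tilde a}_{X^*}=1$ to pin down the scalar. The paper's execution is a bit more direct than yours: instead of arguing consistency of the scalars $c_m$ across $m$ and treating the degenerate case $x^m=0$ separately, it simply fixes a coordinate index $m$ and sends $N\to\infty$ in the equality $a_m/\nu_N(a)=\tilde a_m/\nu_N(\tilde a)$ (which holds for all $N\ge m$ large enough that $x^N\neq 0$), using $\nu_N(a)\to 1$ and $\nu_N(\tilde a)\to 1$ to get $a_m=\tilde a_m$ immediately; this bypasses both of the bookkeeping issues you flagged.
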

\begin{proof}
Assume that there is an element $x \in X$ with two distinct norming functionals: 
$F_x = \sum_{n=1}^\infty a_n e^*_n$ and $G_x = \sum_{n=1}^\infty b_n e^*_n$.
Then Lemma~\ref{norming_functional_in_initial_segment} and the smoothness of 
initial segments provide for any $N \in \mathbb{N}$
\[
\frac{\sum_{n=1}^N a_n e^*_n}{\nu_N(a)} = F^N_x 
= G^N_x = \frac{\sum_{n=1}^N b_n e^*_n}{\nu_N(b)}.
\]
Find such $m \in \mathbb{N}$ that $a_m \ne b_m$.
Then, taking the limit as $N \to \infty$ and taking into account that
$\n{a}_{X^*} = \n{b}_{X^*} = 1$, we get
\[
a_m = \lim_{N\to\infty} \frac{a_m}{\nu_N(a)} = \lim_{N\to\infty} F_x^N(e_m)
= \lim_{N\to\infty} G_x^N(e_m) = \lim_{N\to\infty} \frac{b_m}{\nu_N(b)} = b_m,
\]
which contradicts $a_m \neq b_m$ and thus $X$ is smooth.
\end{proof}

Finally, we need to establish the norming functionals in $X$.
Take any element $x = \sum_{n=1}^\infty x_n e_n$ in $X$ and
consider a sequence of functionals $\{\mathcal{F}_x^n\}_{n=0}^\infty$ defined as 
follows: for any $y = \sum_{n=1}^\infty y_n e_n \in X$
\begin{align*}
\mathcal{F}_x^0(y) &= 0,
\text{ and for any } n \geq 1
\\
\mathcal{F}_x^n(y) &= \frac{\vartheta_{n-1}^{p_n-1}(x) \mathcal{F}_x^{n-1}(y)  
+ \text{sgn}\,x_n |x_n|^{p_n-1} y_n}{\vartheta_n^{p_n-1}(x)}
\\
&= \vartheta_n^{1-p_{n+1}}(x) \sum_{k=1}^n \br{\text{sgn}\,x_k |x_k|^{p_k-1} y_k 
\prod_{j=k}^n \vartheta_j^{p_{j+1}-p_j}(x)}.
\end{align*}
\begin{lemma}\label{norming_functional_in_X^n}
Let $x = \sum_{n=1}^m x_n e_n$ be an element in $X$.
Then $\mathcal{F}_x^m$ is the norming functional for $x$.
\end{lemma}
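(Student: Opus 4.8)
The plan is an induction on $m$ that verifies directly the two properties of a norming functional: $\mathcal{F}_x^m(x) = \n{x}_X$ and $\n{\mathcal{F}_x^m}_{X^*} = 1$; uniqueness is then automatic from Lemma~\ref{X_is_smooth}. Two preliminary observations make the bookkeeping clean. First, since $x$ is supported on the first $m$ coordinates the $\vartheta_n$-recursion stabilizes for $n \geq m$, so $\n{x}_X = \vartheta_m(x)$; likewise $\mathcal{F}_x^m$ is a finite combination $\sum_{n=1}^m a_n e_n^*$ of coordinate functionals (hence bounded, since $\n{\cdot}_X$ and $\n{\cdot}_1$ are equivalent), and its coefficient sequence $a$ is supported on the first $m$ coordinates, so $\n{\mathcal{F}_x^m}_{X^*} = \nu_m(a)$. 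Thus it suffices to show $\mathcal{F}_x^m(x) = \vartheta_m(x)$ and $\nu_m(a) = 1$.

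For the base case $m = 1$ one computes $\mathcal{F}_x^1(y) = (\text{sgn}\, x_1)\, y_1$, so $\mathcal{F}_x^1(x) = |x_1| = \vartheta_1(x)$ and its coefficient sequence has $\nu_1$-norm $1$. For the inductive step, write $x = x' + x_m e_m$ with $x' = \sum_{n=1}^{m-1} x_n e_n$, and note that $\mathcal{F}_x^{m-1} = \mathcal{F}_{x'}^{m-1}$ and $\vartheta_{m-1}(x) = \vartheta_{m-1}(x')$ because these objects depend only on the first $m-1$ coordinates of $x$. By the inductive hypothesis $\mathcal{F}_{x'}^{m-1}(x') = \vartheta_{m-1}(x')$ and its coefficient sequence $a'$ satisfies $\nu_{m-1}(a') = 1$. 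Plugging this into the first (recursive) form of $\mathcal{F}_x^m$, the numerator evaluated at $x$ collapses to $\vartheta_{m-1}^{p_m}(x) + |x_m|^{p_m} = \vartheta_m^{p_m}(x)$, which after dividing by $\vartheta_m^{p_m-1}(x)$ gives $\mathcal{F}_x^m(x) = \vartheta_m(x)$. For the norm, the coefficients of $\mathcal{F}_x^m$ at indices $< m$ are $\vartheta_{m-1}^{p_m-1}(x)\,\vartheta_m^{1-p_m}(x)$ times those of $\mathcal{F}_{x'}^{m-1}$, and the coefficient at index $m$ is $(\text{sgn}\, x_m)\,|x_m|^{p_m-1}\,\vartheta_m^{1-p_m}(x)$; using $\nu_{m-1}(a') = 1$, homogeneity of $\nu_{m-1}$, and the exponent identity $(p_m - 1)q_m = p_m$, one obtains $\nu_{m-1}^{q_m}(a) + |a_m|^{q_m} = \big(\vartheta_{m-1}^{p_m}(x) + |x_m|^{p_m}\big)\big/\vartheta_m^{p_m}(x) = 1$, i.e. $\nu_m(a) = 1$. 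This is the crux of the computation: the identity $(p_m-1)q_m = p_m$ is exactly what makes the $\nu_m$- and $\vartheta_m$-recursions dual, and it is what forces $\nu_m(a) = 1$.

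The one genuine subtlety — and the main obstacle to a clean write-up — is the degenerate case in which $\vartheta_k(x) = 0$ for some $k$, i.e. $x_1 = \dots = x_k = 0$, which makes the denominators in the recursive definition of $\mathcal{F}_x^n$ indeterminate. I would handle this first, before starting the induction: let $k_0$ be the least index with $x_{k_0} \neq 0$ (it exists since norming functionals are only defined for $x \neq 0$), set $\mathcal{F}_x^n = 0$ for $n < k_0$ as the natural value of the $0/0$ expression, and check directly that $\mathcal{F}_x^{k_0}(y) = (\text{sgn}\, x_{k_0})\, y_{k_0}$; the induction above then runs without change starting from $m = k_0$. Apart from this case the argument is a routine verification.
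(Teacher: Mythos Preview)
Your proof is correct and follows the same induction on $m$ as the paper's; the only difference is that for the norm bound the paper shows $|\mathcal{F}_x^m(y)| \leq \vartheta_m(y)$ directly via H\"older's inequality (so $\n{\mathcal{F}_x^m}\leq 1$ without appealing to the dual description), whereas you compute $\nu_m(a)=1$ using the explicit dual $X^*$ set up earlier. Your explicit treatment of the degenerate case $\vartheta_k(x)=0$ is a point the paper glosses over.
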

\begin{proof}
We will use induction by $m$.
For $m = 1$ 
\[
\mathcal{F}_x^1(y) = \text{sgn}\,x_1 \, y_1,
\]
and $\mathcal{F}_x^1(x) = \vartheta_1(x) = \n{x}_X$, 
$|\mathcal{F}_x^1(y)| = \vartheta_1(y) = \n{y}_X$.
For $m > 1$
\[
\mathcal{F}_x^m(y) = \frac{\vartheta_{m-1}^{p_m-1}(x) \mathcal{F}_x^{m-1}(y)  
+ \text{sgn}\,x_m |x_m|^{p_m-1} y_m}{\vartheta_m^{p_m-1}(x)}.
\]
Then
\[
\mathcal{F}_x^m(x) = \frac{\vartheta_{m-1}^{p_m}(x) + |x_m|^{p_m}}{\vartheta_m^{p_m-1}(x)}
= \vartheta_m(x) = \n{x}_X,
\]
and induction hypothesis and H\"older's inequality provide
\begin{align*}
|\mathcal{F}_x^m(y)| 
&\leq \frac{\vartheta_{m-1}^{p_m-1}(x) |\mathcal{F}_x^{m-1}(y)| 
+ |x_m|^{p_m-1} |y_m|}{\vartheta_m^{p_m-1}(x)}
\\
&\leq \frac{\vartheta_{m-1}^{p_m-1}(x) \vartheta_{m-1}(y) 
+ |x_m|^{p_m-1} |y_m|}{\vartheta_m^{p_m-1}(x)}
\\
&\leq \br{\vartheta_{m-1}^{p_m}(y) + |y_m|^{p_m}}^{1/p_m}
= \vartheta_n(y) = \n{y}_X.
\end{align*}
\end{proof}
Thus, we have established the norming functionals $\mathcal{F}_n$
in the initial segments $X^n$. In particular, for any $x, y \in X$
\begin{align*}
\mathcal{F}_x^1(y) &= \text{sgn}\,x_1 \, y_1,
\\
\mathcal{F}_x^2(y) &= \frac{\text{sgn}\,x_1 |x_1|^{p_2-1} y_1 
+ \text{sgn}\,x_2 |x_2|^{p_2-1} y_2}{\vartheta_2^{p_2-1}(x)},
\\
\mathcal{F}_x^3(y) &= \frac{\br{\text{sgn}\,x_1 |x_1|^{p_2-1} y_1 
+ \text{sgn}\,x_2 |x_2|^{p_2-1} y_2}\vartheta_2^{p_3-p_2}(x)
+ \text{sgn}\,x_3 |x_3|^{p_3-1} y_3}{\vartheta_3^{p_3-1}(x)}.
\end{align*}

We now choose a dictionary $\D$ in $X$ and an element $f \in X$ such that
WCGA of $f$ diverges. Without loss of generality assume $t_n = 1$ for 
each $n \geq 1$, i.e. an element of the dictionary that maximizes
$F_{f_{n-1}}$ is chosen on each step. Let
\begin{align*}
g_0 &= e_1 + e_2 + e_3,
\\
g_k &= e_k + e_{k+1}
\text{ for each } k \geq 1,
\end{align*}
and take $\D = \{ \pm g_n/\n{g_n}_X\}_{n=0}^\infty$.
Note that for any $k \geq 1$
\begin{equation}\label{norm(g_0)>norm(g_k)}
\n{g_k}_X = 2^{1/p_{k+1}} \leq 2^{1/p_2} 
< \br{1 + 2^{p_3/p_2}}^{1/p_3} = \n{g_0}_X.
\end{equation}
Take $f = e_1 \in X$, then $f = g_0 - g_2 \in A_0(\D)$.
We will show that the WCGA diverges even for such a simple element. 
We claim that for any $m \geq 1$
\begin{equation}\label{induction_hypothesis_smoothness_non-sufficiency_wcga}
\phi_m = \pm g_m/\n{g_m}_X,
\end{equation}
where by $\pm$ we understand some sign --- plus or minus.
We will prove this claim using induction by $m$.
\\
Consider the case $m = 1$. Lemma~\ref{norming_functional_in_X^n} provides
$F_f = \mathcal{F}_f^1$, thus
\begin{align*}
&|\mathcal{F}^1_f(g_0)| = 1,
\\
&|\mathcal{F}^1_f(g_1)| = 1,
\\
&|\mathcal{F}^1_f(g_k)| = 0
\text{ for any } k > 1.
\end{align*}
Then estimate~\eqref{norm(g_0)>norm(g_k)} guarantees that $\phi_1 = \pm g_1/\n{g_1}_X$.
\\
Consider the case $m > 1$. By induction hypothesis the elements 
\[
\pm g_1/\n{g_1}_X, \pm g_2/\n{g_2}_X, \ldots, \pm g_{m-1}/\n{g_{m-1}}_X
\]
were chosen on previous steps.
Then $f_{m-1} = \sum_{n=1}^m c_n e_n$ for some coefficients $\{c_n\}_{n=1}^m$, 
and therefore $F_{f_{m-1}} = \mathcal{F}^m_{f_{m-1}}$
by Lemma~\ref{norming_functional_in_X^n}. 
Note that $f_{m-1} \in X^m$, which is a uniformly smooth space since 
it is smooth and finitely-dimensional.
Hence, applying Lemma~\ref{F_n(phi_n)} we obtain 
that $F_{f_{m-1}}(g_k) = 0$ for any $k = 1, \ldots, m-1$, i.e.
\begin{align*}
&\mathcal{F}^m_{f_{m-1}}(g_1) 
= \frac{\text{sgn}\,c_1 |c_1|^{p_2-1} + \text{sgn}\,c_2 |c_2|^{p_2-1}}
{\vartheta_2^{p_2-1}(f_{m-1}) \ldots \vartheta_m^{p_m-1}(f_{m-1})} = 0,
\\
&\mathcal{F}^m_{f_{m-1}}(g_2) 
= \frac{\text{sgn}\,c_2 |c_2|^{p_2-1} \vartheta_2^{p_3-p_2}(f_{m-1}) 
+ \text{sgn}\,c_3 |c_3|^{p_3-1}}
{\vartheta_3^{p_3-1}(f_{m-1}) \ldots \vartheta_m^{p_m-1}(f_{m-1})} = 0,
\\
&\cdots
\\
&\mathcal{F}^m_{f_{m-1}}(g_{m-1}) 
= \frac{\text{sgn}\,c_{m-1} |c_{m-1}|^{p_{m-1}-1} \vartheta_{m-1}^{p_m-p_{m-1}}(f_{m-1}) 
+ \text{sgn}\,c_m |c_m|^{p_m-1}}{\vartheta_m^{p_m-1}(f_{m-1})} = 0.
\end{align*}
From these equalities we derive
\begin{align*}
&|c_2|^{p_2-1} = |c_1|^{p_2-1},
\\
&|c_3|^{p_3-1} = |c_2|^{p_2-1} \vartheta_2^{p_3-p_2}(f_{m-1}),
\\
&\cdots
\\
&|c_m|^{p_m-1} = |c_{m-1}|^{p_{m-1}-1} \vartheta_{m-1}^{p_m-p_{m-1}}(f_{m-1}),
\end{align*}
which imply that for any $k = 3, \ldots, m$
\begin{equation}\label{c_k_recursive}
|c_k|^{p_k-1} = |c_1|^{p_2-1} \prod_{n=2}^{k-1} \vartheta_n^{p_{n+1}-p_n}(f_{m-1}).
\end{equation}
Therefore
\begin{align*}
&|\mathcal{F}^m_{f_{m-1}}(g_0)| 
= |\mathcal{F}^m_{f_{m-1}}(g_0-g_1)|
= \vartheta_m^{1-p_{m+1}}(f_{m-1}) \br{|c_3|^{p_3-1} \prod_{j=3}^m \vartheta_j^{p_{j+1}-p_j}(f_{m-1})},
\\
&|\mathcal{F}^m_{f_{m-1}}(g_m)| 
= \frac{|c_m|^{p_m-1}}{\vartheta_m^{p_m-1}(f_{m-1})},
\\
&|\mathcal{F}^m_{f_{m-1}}(g_k)| = 0
\text{ for any } k \in \mathbb{N} \setminus \{m\}.
\end{align*}
Thus, by~\eqref{c_k_recursive}
\[
|\mathcal{F}^m_{f_{m-1}}(g_0)| 
= \vartheta_m^{1-p_{m+1}}(f_{m-1}) \br{|c_1|^{p_2-1} \prod_{j=2}^m \vartheta_j^{p_{j+1}-p_j}(f_{m-1})}
= |\mathcal{F}^m_{f_{m-1}}(g_m)|, 
\]
and estimate~\eqref{norm(g_0)>norm(g_k)} guarantees that $\phi_m = \pm g_m/\n{g_m}_X$,
which completes the proof of 
assumption~\eqref{induction_hypothesis_smoothness_non-sufficiency_wcga}.

Hence, the element $\pm g_0/\n{g_0}_X$ will not be chosen
and $\Phi_n = \text{span}\,\{g_1, \ldots, g_n\}$ for any $n \geq 1$.
Then the equivalence of the norms~\eqref{1_X_norm_equivalence} provides
\[
\n{f_n}_X = \inf_{G\in\Phi_n} \n{f - G}_X
\geq \rho \inf_{G\in\Phi_n} \n{f - G}_1 = \rho \not\to 0
\text{ as } n \to \infty,
\]
i.e. the WCGA of $f$ diverges.

\section*{Acknowledgements}
I would like to thank V.\,N.~Temlyakov for his guidance,
and S.\,J.~Dilworth for his continued input and advice on Banach space theory.
\\
I am also grateful to the anonymous reviewer who examined this paper in detail
and whose valuable remarks enhanced the structure of the paper and especially
the statement of Theorem~\ref{theorem_gawcga_convergence_in_P_q}.

\section*{References}
\bibliographystyle{abbrv}
\bibliography{bibliography}

\begin{thebibliography}{10}

\bibitem{beauzamy_ibstg}
B.~{Beauzamy}.
\newblock {\em {I}ntroduction to {B}anach spaces and their geometry}.
\newblock North-Holland Publishing Company, 1982.

\bibitem{dereventsov_awcgausbs}
A.~V. Dereventsov.
\newblock {O}n the {A}pproximate {W}eak {C}hebyshev {G}reedy {A}lgorithm in
  uniformly smooth {B}anach spaces.
\newblock {\em Journal of Mathematical Analysis and Applications},
  436(1):\,288--304, 2016.

\bibitem{dilworth_kutzarova_temlyakov_csgabs}
S.~J. {Dilworth}, D.~{Kutzarova}, and V.~N. {Temlyakov}.
\newblock {C}onvergence of some greedy algorithms in {B}anach spaces.
\newblock {\em Journal of Fourier Analysis and Applications}, 8(5):\,489--506,
  2002.

\bibitem{donahue_darken_gurvits_sontag_rcanhs}
M.~J. Donahue, C.~Darken, L.~Gurvits, and E.~Sontag.
\newblock {R}ates of convex approximation in non-{H}ilbert spaces.
\newblock {\em Constructive Approximation}, 13(2):\,187--220, 1997.

\bibitem{dowling_johnson_lennard_turett_ojdt}
P.~N. Dowling, W.~B. Johnson, C.~J. Lennard, and B.~Turett.
\newblock {T}he optimality of {J}ames's distortion theorems.
\newblock {\em Proceedings of the American Mathematical Society},
  125(1):\,167--174, 1997.

\bibitem{dubinin_gaa}
V.~V. {Dubinin}.
\newblock {\em {G}reedy algorithms and applications}.
\newblock Doctoral Dissertation. University of South Carolina, 1997.

\bibitem{galatenko_livshitz_gawga}
V.~V. {Galatenko} and E.~D. {Livshitz}.
\newblock {G}eneralized approximate weak greedy algorithms.
\newblock {\em Mathematical Notes}, 78(2):\,170--184, 2005.

\bibitem{gribonval_nielsen_awga}
R.~{Gribonval} and M.~{Nielsen}.
\newblock {A}pproximate weak greedy algorithms.
\newblock {\em Advances in Computational Mathematics}, 14(4):\,361--378, 2001.

\bibitem{temlyakov_konyagin_garbgms}
S.~V. {Konyagin} and V.~N. {Temlyakov}.
\newblock {G}reedy approximation with regard to bases and general minimal
  systems.
\newblock {\em Serdica Mathematical Journal}, 28:\,305--328, 2002.

\bibitem{livshitz_cgabs}
E.~D. {Livshitz}.
\newblock {C}onvergence of greedy algorithms in {B}anach spaces.
\newblock {\em Mathematical Notes}, 73(3):\,342--358, 2003.

\bibitem{temlyakov_gabs}
V.~N. {Temlyakov}.
\newblock {G}reedy algorithms in {B}anach spaces.
\newblock {\em Advances in Computational Mathematics}, 14(3):\,277--292, 2001.

\bibitem{temlyakov_gtabsa}
V.~N. {Temlyakov}.
\newblock {G}reedy-type approximation in {B}anach spaces and applications.
\newblock {\em Constructive Approximation}, 21(2):\,257--292, 2005.

\bibitem{temlyakov_garb}
V.~N. {Temlyakov}.
\newblock {\em {G}reedy approximations with regard to bases}.
\newblock Proceedings of the International Congress of Mathematicians. European
  Mathematical Society, 2 edition, 2006.

\bibitem{temlyakov_gabs2}
V.~N. {Temlyakov}.
\newblock {\em {G}reedy approximation in Banach spaces}.
\newblock Proceedings of the International Conference at Miami University. De
  Gruyter Proceedings in Mathematics, 2007.

\bibitem{temlyakov_ga}
V.~N. {Temlyakov}.
\newblock {\em {G}reedy approximation}.
\newblock Cambridge University Press, 2011.

\end{thebibliography}

\end{document}